\def\i1n{i=1,\cdots,n}
\def\j1n{j=1,\cdots,n}
\def\ij1n{i,j=1,\cdots,n}
\def \i{\mathrm i}
 \numberwithin{equation}{section}
\theoremstyle{definition}
 \newtheorem{thm}{\indent Theorem}[section]
 \newtheorem{lem}{\indent Lemma}[section]
\theoremstyle{definition}
\theoremstyle{theorem}
\theoremstyle{lemma}
\newcommand{\md}{\mbox{d}}
\newcommand{\be}{\begin{equation}}
\newcommand{\ee}{\end{equation}}
\newcommand{\beq}{\begin{equation*}}
\newcommand{\eeq}{\end{equation*}}
\begin{document}

\title{Global well-posedness for 2D inviscid and resistive MHD system near an equilibrium}


\author{Yuanyuan Qiao\thanks{School of Mathematics Science, Fudan University, Shanghai, P. R. China (20110180038@fudan.edu.cn).}}

\date{}

\maketitle

\begin{abstract}
We study the global existence of classical solutions for two-dimensional incompressible MHD system with only magnetic diffusion. By using the time-weighted lower-order energy and uniformly bounded higher-order energy estimates, we prove the global existence result under the assumption that the initial magnetic field is close enough to a constant magnetic vorticity equilibrium.
\end{abstract}

\textbf{MSC(2020)} 35Q35, 35B65, 76E25.

\textbf{Keywords}: global well-posedness, MHD system, magnetic vorticity background.

\section{Introduction}\label{intro}

The 2D classical incompressible MHD system can be expressed as follows:
\be
\begin{cases}\label{eqs1}
{ \begin{array}{ll} u_t+u\cdot\nabla u-\mu\Delta u+\nabla (P+\frac{1}{2}|B|^2)=B\cdot\nabla B,
\quad (t,x)\in\mathbb{R}^+\times\mathbb{R}^2,\\
B_t+u\cdot\nabla B-\nu\Delta B=B\cdot\nabla u,\\
\nabla\cdot u=\nabla\cdot B=0,\\
t=0: u=u_0(x), B=B_0(x).
 \end{array} }
\end{cases}
\ee
Here $u=(u_1,u_2)^\top$, $B=(B_1,B_2)^\top$ and $P$ represent the velocity field,
magnetic field and scalar pressure of the fluid, respectively.
$\mu$ denotes the kinematic viscosity, $\frac{1}{\nu}$ the magnetic Reynolds number.

Magnetohydrodynamics (MHD) system describes the evolution of electrically conducting fluids, such as plasmas, liquid metals, and electrolytes in a magnetic field through the proper coupling of hydrodynamic equations and Maxwell's equations.
In addition to its widespread physical applications, the theoretical analysis of (\ref{eqs1}) holds significant importance in mathematics as well.
The main focus of this paper is on well-posed results in the two-dimensional case.

It is widely recognized that the viscous and resistive MHD system ($\mu>0$, $\nu>0$) is globally well-posed in two-dimensional space (see \cite{AP2008,DL1972, ST1983}).
For the inviscid and non-resistive case ($\mu=\nu=0$), Bardos, Sulem and Sulem \cite{BSS1988} proved the global well-posedness result for small initial $u\pm (B-e)$ in a certain weighted H\"{o}lder's space (see further results in \cite{CL2018}).

For the viscous and non-resistive MHD equations ($\mu>0$, $\nu=0$), the global existence of smooth solutions for all time remains a challenging problem even in the two-dimensional case for generic smooth initial data. A recent strategy has been to search for global solutions under a non-trivial background magnetic field.
Lin, Xu and Zhang \cite{LXZ2015} obtained the global well-posedness with small smooth initial data in the Lagrangian coordinates, under the assumption that the initial magnetic field is close enough to the constant equilibrium state $B^{(0)}=e_2$. Ren, Wu, Xiang and Zhang \cite{RWXZ2014}
examined the same issue in Eulerian coordinates and employed direct energy estimates (see \cite{HL2014,Z2014} for more results).
In addition, Zhang \cite{Z2016} considered another constant equilibrium state $B^{(0)}=(\frac{1}{\varepsilon}, 0)^\top$, commonly known as the large background magnetic field.
Very recently, Qiao and Zhou \cite{QZ2023} proved that the 2D incompressible viscous MHD system
without magnetic diffusion may still be stable near a non-constant equilibrium state: $B^{(0)}=(x_2, -x_1)^\top$.

For the inviscid and resistive MHD system ($\mu=0$, $\nu>0$, the case of our consideration),
Lei and Zhou \cite{LZ2009} proved the global existence of $H^1$ weak solution to the system $(\ref{eqs1})$
(also see \cite{CW2011}). However, the problem of uniqueness and regularity remains open.
Under certain symmetry assumptions, Zhou and Zhu \cite{ZZ2018} established the global existence of small smooth solutions to the 2D MHD system near a constant equilibrium state in the periodic domain.
They employed the time-weighted energy method, which can also be applied to other equations such as the Oldroyd-B model, Euler-Poisson system and compressible MHD system (see \cite{WZ2022,Z2018,ZZ2019} for more details).
Wei and Zhang \cite{WZ2020} recently proved the global existence of small smooth solutions in $H^4(\mathbb{T}^2)$ under periodic boundary condition $\int_{\mathbb{T}^2}B(x,t)\md x=0$, and there is no requirement for the initial magnetic field to be close to any equilibrium state.
Ye and Yin \cite{YY2022} conducted further research on the lower regularity of $s>2$ in $H^s(\mathbb{T}^2)$,
improving the result for $s=4$ in \cite{WZ2020}.

For more well-posed results of the three-dimensional MHD system, please refer to references
\cite{AZ2017,CZZ2022,HXY2018,L2015,LZ2014,PZZ2018,LZ2015,XZ2015}.

Our main result concerns the following incompressible two-dimensional inviscid and resistive MHD system:
\be
\begin{cases}\label{eqs2}
{ \begin{array}{ll} u_t+u\cdot\nabla u+\nabla (P+\frac{1}{2}|B|^2)=B\cdot\nabla B,
\quad (t,x)\in\mathbb{R}^+\times\mathbb{R}^2,\\
B_t+u\cdot\nabla B-\Delta B=B\cdot\nabla u, \\
\nabla\cdot u=\nabla\cdot B=0,\\
t=0: u=u_0(x), B=B_0(x).
 \end{array} }
\end{cases}
\ee
Clearly, $\big(u^{(0)},P^{(0)},B^{(0)}\big)=\big(0, -|x|^2, (x_2,-x_1)^\top\big)$ is a special solution to this system.
By setting $p=P-P^{(0)}$, $b=B-B^{(0)}$, we can obtain the corresponding perturbation system of $(u,p,b)$:
\be\label{eqs3}
\begin{cases}
{ \begin{array}{ll} u_t+u\cdot\nabla u+\nabla\pi=b\cdot\nabla b-\partial_\theta b,
\quad (t,x)\in\mathbb{R}^+\times\mathbb{R}^2,\\
b_t+u\cdot\nabla b-\Delta b+\nabla\psi=b\cdot\nabla u-\partial_\theta u, \\
\nabla\cdot u=\nabla\cdot b=0,\\
t=0: u=u_0(x), b=b_0(x),\\
 \end{array} }
\end{cases}
\ee
where $\pi=P+\frac{1}{2}|B|^2+\frac{1}{2}|x|^2-\phi$, $\phi$ and $\psi$ are scalar stream functions.
In fact, the two-dimensional Biot-Savart law, along with the notation $\nabla^\bot=(-\partial_2, \partial_1)$,
gives
\begin{eqnarray}\label{1.4}
u=\nabla^\bot\psi,\quad b=\nabla^\bot\phi.
\end{eqnarray}
Subsequently, it is straightforward to obtain $b\cdot\nabla B^{(0)}=\nabla\phi$ and $u\cdot\nabla B^{(0)}=\nabla\psi$. Lastly, we also verify
$B^{(0)}\cdot\nabla=x_2\partial_1-x_1\partial_2:=-\partial_\theta.$
Here and in the following, we use $\partial_i$ to denote $\partial_{x_i}$, $i\in{1,2}$.
The variables $\theta$ and $r$, which will be used later, represent spatial variables in polar coordinates.

Assuming the initial data has the following symmetries:
\begin{align}\label{sym}
\begin{aligned}
&u_{0,1}(x),\quad b_{0,2}(x)\mbox{ are odd functions of}\ x_1,\\
&u_{0,2}(x),\quad b_{0,1}(x)\mbox{ are even functions of}\ x_1,\\
&u_{0,2}(x),\quad b_{0,1}(x)\mbox{ are odd functions of}\ x_2,\\
&u_{0,1}(x),\quad b_{0,2}(x)\mbox{ are even functions of}\ x_2.
\end{aligned}
\end{align}
We then state our main result.
\begin{thm}\label{thm0.1}
Consider the system (\ref{eqs3}) with the initial data satisfying the condition (\ref{sym}).
If there exists a small enough constant $\varepsilon>0$ such that
\begin{eqnarray}\label{**}
\|u_0\|_{H^{2s+8}}+\|b_0\|_{H^{2s+8}}\leq\varepsilon,
\end{eqnarray}
then the system (\ref{eqs3}) has a global smooth solution, provided $s\geq 2$.
\end{thm}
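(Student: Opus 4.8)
The plan is to run a two--tier energy scheme --- a uniformly bounded higher--order energy together with a time--weighted lower--order energy --- closed by a continuity argument. First I would record a local existence theory: for data in $H^{2s+8}$ one builds a local smooth solution by parabolic regularization and a fixed point, the only mild novelty being that the background terms $\partial_\theta b,\ \partial_\theta u$ have unbounded coefficients; this is harmless because $\partial_\theta=x_1\partial_2-x_2\partial_1$ is a Killing field, so $[\partial_\theta,\Delta]=0$ and $[\partial_\theta,\nabla]=\pm\nabla^\perp$ is a constant--coefficient first--order operator. I would then verify that the parity class (\ref{sym}) is invariant under the flow of (\ref{eqs3}); on this subspace $\partial_\theta$ has trivial kernel, and decomposing into angular Fourier modes yields the Poincar\'{e}--type bounds $\|f\|_{L^2}\le\|\partial_\theta f\|_{L^2}$ and $\|\partial_\theta^{-1}f\|_{L^2}\le\|f\|_{L^2}$, which are used throughout.

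Next, the algebraic heart of the estimates. Linearizing (\ref{eqs3}) and taking the curl gives $\omega_t=-\partial_\theta j$, $j_t-\Delta j=-\partial_\theta\omega$ with $\omega=\mathrm{curl}\,u$, $j=\mathrm{curl}\,b$, equivalently $j_{tt}-\Delta j_t-\partial_\theta^2 j=0$: a dissipative Klein--Gordon equation whose mass operator $-\partial_\theta^2$ is $\ge 1$ on the symmetric subspace, so that the only slowly damped modes are the low radial frequencies, where the diffusion $-\Delta$ degenerates. Working with $(u,b)$ directly, I use derivatives built from powers of $\Delta$ (namely $\Delta^m$ for even orders and $\nabla\Delta^m$ for odd orders, which span an equivalent $H^{2s+8}$ norm), and two cancellations become decisive. (i) The linear coupling terms $-\int\partial^\alpha\partial_\theta b\cdot\partial^\alpha u-\int\partial^\alpha\partial_\theta u\cdot\partial^\alpha b$ cancel exactly: because $[\partial_\theta,\Delta]=0$ and $\int\partial_\theta(\,\cdot\,)\,dx=0$ (as $\nabla\cdot x^\perp=0$) the even--order parts cancel outright, and for odd orders the only remainder is $\mp\int(\nabla^\perp\Delta^m b\cdot\nabla\Delta^m u+\nabla^\perp\Delta^m u\cdot\nabla\Delta^m b)$, whose integrand vanishes pointwise; hence the background field generates no growth. (ii) At top order the worst nonlinear contributions $u\cdot\nabla u$, $b\cdot\nabla b$, $u\cdot\nabla b$, $b\cdot\nabla u$ assemble into exact transport derivatives and integrate away, leaving only commutator terms bounded by $\|(\nabla u,\nabla b)\|_{L^\infty}$ times the energy. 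I therefore expect an inequality of the form $\frac{d}{dt}\mathcal E_{2s+8}+c\|\nabla b\|_{H^{2s+8}}^2\lesssim\big(\|(\nabla u,\nabla b)\|_{L^\infty}+\sqrt{\mathcal E_{2s+8}}\big)\mathcal E_{2s+8}$, where $\mathcal E_{2s+8}$ may also include angular weights $\partial_\theta^{\ell}$ (which commute with the diffusion).

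The lower--order, time--weighted estimate is where genuine decay is produced. Here I would add to the lower--order energy $\mathcal E_M$ (with $M\sim s+4$) a hypocoercive correction $\delta\sum_{|\beta|\le M}\langle\partial^\beta\omega,\partial_\theta^{-1}\partial^\beta j\rangle$, whose time derivative contributes a negative term $-\delta\|\omega\|_{H^M}^2$ --- and, through the Poincar\'{e} bound, damping of $u$ as well --- at the price of terms carried by the magnetic dissipation plus a single ``bad'' piece supported on the angular--zero (radial) mode of $j$, which obeys a plain heat equation and must be treated separately. Interpolating the resulting coercive low--order dissipation against the uniformly bounded $H^{2s+8}$ norm turns it into an algebraic decay rate $\|(u,b)(t)\|_{H^{M}}\lesssim\varepsilon(1+t)^{-\gamma}$ with $\gamma$ large (this is precisely why the regularity gap between $2s+8$ and $M$ is needed). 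Feeding this back gives $\int_0^\infty\|(\nabla u,\nabla b)\|_{L^\infty}\,dt\lesssim\varepsilon$, which closes the higher--order inequality; a continuity argument on the combined quantity $\mathcal E_M^{\mathrm{weighted}}+\mathcal E_{2s+8}$ then yields the global bound and hence the global smooth solution.

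The step I expect to be the main obstacle is precisely this last one: designing the hypocoercive multiplier so that all of its error terms are genuinely absorbed by the magnetic diffusion --- in particular controlling the radial mode of the magnetic vorticity, which sees only heat--type smoothing --- while simultaneously tuning the decay exponent $\gamma$ and the time weights so that every nonlinear term in the higher--order estimate becomes time--integrable. Converting the degenerate low--frequency behaviour on $\mathbb R^2$ into a clean quantitative rate, rather than mere qualitative decay, is the delicate point, and the large number of derivatives demanded in (\ref{**}) is exactly the slack that makes it work.
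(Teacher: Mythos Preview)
Your overall architecture---two tiers of energy, propagation of the parity class, the Poincar\'{e} inequality $\|f\|_{H^k}\lesssim\|\partial_\theta f\|_{H^k}$ on the symmetric subspace, and a continuity argument---is exactly the framework the paper uses. The substantive divergence is in \emph{how} the velocity decay is extracted, and there the paper takes a different and in fact cleaner route than the one you sketch.

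You propose to work at the curl level and add a hypocoercive correction $\delta\langle\partial^\beta\omega,\partial_\theta^{-1}\partial^\beta j\rangle$. The paper instead differentiates (\ref{eqs3}) in $t$, substitutes, and obtains for the \emph{vector fields} $u$ and $b$ themselves the second--order damped wave system (\ref{3.1}); the lower--order energies $e_0,e_1$ are then built by testing (\ref{3.1}) against $\Delta^{2m}u_t,\ \Delta^{2m}b_t$ and $-\Delta^{2m+1}u,\ -\Delta^{2m+1}b$ and combining. This produces dissipation of $\|u_t,b_t\|_{\dot H^{2m+1}}$ and $\|\partial_\theta u,\partial_\theta b\|_{\dot H^{2m+1}}$ directly, and the Poincar\'{e} lemma (applied to the vectors $u,b$, which \emph{do} have vanishing angular mean) converts the latter into control of $u,b$. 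No inverse $\partial_\theta^{-1}$ ever appears. This is precisely what sidesteps the obstacle you flag: under (\ref{sym}) the scalar $j=\mathrm{curl}\,b$ is even in both $x_1$ and $x_2$ and therefore \emph{can} carry a nontrivial radial (angular--zero) mode, so $\partial_\theta^{-1}j$ is not defined without a separate treatment of that mode---a treatment the paper simply does not need.

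Two further differences worth noting. First, the paper does \emph{not} keep the top energy uniformly bounded: $E_0$ carries the weight $(1+t)^{-\sigma}$, so $\|u,b\|_{H^{2s+8}}$ is allowed to grow like $(1+t)^{\sigma/2}$; the compensating gain is the extra space--time piece $\int(1+\tau)^{-1-\sigma}\|u,b\|_{H^{2s+8}}^2\,d\tau$ inside $E_0$, which is exactly what is used to close the commutator estimate (\ref{d1}). Second, the paper never proves pointwise decay $\|(u,b)\|_{H^M}\lesssim\varepsilon(1+t)^{-\gamma}$ and never needs $\int_0^\infty\|\nabla u\|_{L^\infty}\,dt<\infty$; instead the decay information is packaged as the time--weighted $L^2_t$ quantity $e_1$ (weight $(1+t)^2$, indices $1\le m\le s-1$), and the three functionals close algebraically via $E(t)\le C_0\varepsilon^2+C_0E(t)^{3/2}$. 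Your interpolation--to--decay plan is a plausible alternative, but the paper's integrated weights are what make the delicate term $\int(1+\tau)^2\langle\Delta^{m+1}u,\Delta^{m+1}(b\cdot\nabla b)\rangle\,d\tau$ (see (\ref{d2})) tractable.
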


Our theorem shows that the system (\ref{eqs2}) has a global smooth solution near the non-constant equilibrium state $B^{(0)}=(x_2,-x_1)^\top$. In other words, the non-constant background magnetic field  may enhance the smoothness and stability of system (\ref{eqs2}).

One of our primary challenges arises from the higher-order energy estimate.
It seems impossible to deal with the nonlinear term $u\cdot\nabla u$ due to the absence of viscosity in the velocity equation.
To overcome this difficulty, we shall transform the system (\ref{eqs3}) into a damped wave type system.
By taking the derivative with respect to $t$ and making several substitutions, we derive
\be\label{3.1}
\begin{cases}
{ \begin{array}{ll} u_{tt}-\Delta u_t-\partial_{\theta\theta}u+\nabla q_1=F_t-\Delta F-\partial_\theta G, \\
b_{tt}-\Delta b_t-\partial_{\theta\theta}b+\nabla q_2=G_t-\partial_\theta F,\\
\nabla\cdot u=\nabla\cdot b=0,
 \end{array} }
\end{cases}
\ee
where we denote $\partial_{\theta\theta}=\partial_\theta^2$ and
\begin{align*}
\begin{aligned}
q_1:=&\pi_t-\Delta\pi-\partial_\theta\psi,\\
q_2:=&\psi_t-\partial_\theta\pi,\\
F:=&b\cdot\nabla b-u\cdot\nabla u,\\
G:=&b\cdot\nabla u-u\cdot\nabla b.
\end{aligned}
\end{align*}
Compared to (\ref{eqs3}), the wave structure in (\ref{3.1}) exhibits more regularity properties of $u$. The linearized system:
\be\label{3.1*}
\begin{cases}
{ \begin{array}{ll} u_{tt}-\Delta u_t-\partial_{\theta\theta}u+\nabla q_1=0, \\
b_{tt}-\Delta b_t-\partial_{\theta\theta}b+\nabla q_2=0,\\
\nabla\cdot u=\nabla\cdot b=0.
 \end{array} }
\end{cases}
\ee
reveals that $u$ and $b$ share the same wave structure.
On one hand, this inspires us to perform energy estimation directly on the damped wave-type system (\ref{3.1}) to derive a decay estimate of $\partial_\theta u$: $\int_0^t(1+\tau)^2\|\partial_\theta u\|_{\dot{H}^3}^2\md\tau$.
On the other hand, the uniqueness of the solution implies that property (\ref{sym}) will persist in the time evolution (refer to \cite{PZZ2018, ZZ2018} for detailed information), which means the zeroth Fourier modes for both $u$ and $b$ are equal to zero:
\begin{align}\label{1.2}
\int_{0}^{2\pi}u(t,r,\theta)\md\theta=0, \quad\int_{0}^{2\pi}b(t,r,\theta)\md\theta=0.
\end{align}
Consequently, by utilizing the Poincar\'{e} inequality, we can acquire the crucial decay estimate: $$\int_0^t(1+\tau)^2\|u\|_{\dot{H}^3}^2\md\tau.$$
However, the nonlinear term $u\cdot\nabla u$ still remains uncontrollable when we seek solutions in the Sobolev space $H^{2s+8}$.
Instead, we aim to control the growth of such norms through the energy frame defined in the following section (for more detailed information, refer to section 3, particularly equation (\ref{d1})).

Another difficulty in our proof arises from the decay estimate process, which generates a new difficult term, for $1\leq m\leq s-1$:
\begin{align*}
\int_0^t(1+\tau)^2\langle\Delta^{m+1}u, \Delta^{m+1}(b\cdot\nabla b)\rangle\md\tau.
\end{align*}
When there is ($2m+3$)-order derivative on $b$, the lower-order energy including $e_0(t)$ and $e_1(t)$ is not enough to control this term.
By carefully balancing the time-weight $(1+t)^2$ and the index of the interpolation inequalities, we finally successfully close the energy frame. (for more details, please refer to equation (\ref{d2}) in Section 3).

The paper is organised as follows: Section 2 introduces two essential lemmas and outlines the framework for the energy estimates. Section 3 is devoted to estimating the energy functionals defined in section 2. Lastly, Section 4 presents the proof of Theorem \ref{thm0.1}.

Let's conclude this section by some notations.
Throughout this paper we will frequently use $\|f,g\|_{H^k}$ to represent $\|f\|_{k}+\|g\|_{H^k}$,
where $\|\cdot\|_{H^k}$ $(k\geq 0)$ is the standard Sobolev norm.
We use $C$ to denote a finite positive constant which may be vary from line to line,
but do not depend on particular solutions or functions.
Given non-negative quantities $a,b\geq 0$, we will use $a\lesssim b$ to denote $a\leq Cb$.
Finally, $\langle\cdot,\cdot\rangle$ denotes the The $L^2$ inner product.

\section{Preliminaries}

In this section, we introduce two lemmas that will be frequently used throughout the paper.
Let's first recall the classical product and commutator estimates.
\begin{lem}\label{lem2.1}\cite{KP1988, KPV1991}
For $k\geq 0$, the following inequalities hold:
\begin{align*}
\|\nabla^k(fg)\|_{L^2}\lesssim&(\|f\|_{L^\infty}\|g\|_{H^k}
+\|f\|_{H^k}\|g\|_{L^\infty}),\\
\|[\nabla^k,f]g\|_{L^2}\lesssim&(\|\nabla f\|_{L^\infty}\|g\|_{H^{k-1}}+\|f\|_{H^{k-1}}\|g\|_{L^\infty}),
\end{align*}
where $[\cdot,\cdot]$ stands for the Poisson bracket: $[a,b]=ab-ba$.
\end{lem}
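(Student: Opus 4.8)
\emph{Proof strategy.} The plan is to treat these as the classical Kato--Ponce product and commutator inequalities. Since $k$ is a non-negative integer here, I would give a direct argument from the Leibniz rule together with the Gagliardo--Nirenberg interpolation inequalities; a Littlewood--Paley/Bony paraproduct argument is an equally natural route and is the one followed in \cite{KP1988,KPV1991}.

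\textbf{The product estimate.} First expand, by the Leibniz rule, $\nabla^k(fg)=\sum_{j=0}^{k}\binom{k}{j}(\nabla^j f)(\nabla^{k-j}g)$. The two endpoint terms $j=0$ and $j=k$ are bounded directly by $\|f\|_{L^\infty}\|\nabla^k g\|_{L^2}$ and $\|\nabla^k f\|_{L^2}\|g\|_{L^\infty}$. For $1\le j\le k-1$, H\"older's inequality with the conjugate pair $\big(\tfrac{2k}{j},\tfrac{2k}{k-j}\big)$ gives $\|(\nabla^j f)(\nabla^{k-j}g)\|_{L^2}\le\|\nabla^j f\|_{L^{2k/j}}\|\nabla^{k-j}g\|_{L^{2k/(k-j)}}$, and then the Gagliardo--Nirenberg inequalities on $\mathbb R^2$,
\[
\|\nabla^j f\|_{L^{2k/j}}\lesssim\|f\|_{L^\infty}^{1-j/k}\|\nabla^k f\|_{L^2}^{j/k},\qquad
\|\nabla^{k-j}g\|_{L^{2k/(k-j)}}\lesssim\|g\|_{L^\infty}^{j/k}\|\nabla^k g\|_{L^2}^{1-j/k},
\]
turn the product into $\big(\|f\|_{L^\infty}\|\nabla^k g\|_{L^2}\big)^{1-j/k}\big(\|\nabla^k f\|_{L^2}\|g\|_{L^\infty}\big)^{j/k}$; the weighted AM--GM (Young) inequality $a^{1-\vartheta}b^{\vartheta}\le a+b$ then bounds it by $\|f\|_{L^\infty}\|\nabla^k g\|_{L^2}+\|\nabla^k f\|_{L^2}\|g\|_{L^\infty}$. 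Summing over $j$ and using $\|\nabla^k g\|_{L^2}\le\|g\|_{H^k}$, $\|\nabla^k f\|_{L^2}\le\|f\|_{H^k}$ yields the first inequality.

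\textbf{The commutator estimate.} The $j=0$ term of the Leibniz expansion is exactly $f\nabla^k g$, so it cancels in $[\nabla^k,f]g=\nabla^k(fg)-f\nabla^k g$, leaving $[\nabla^k,f]g=\sum_{j=1}^{k}\binom{k}{j}(\nabla^j f)(\nabla^{k-j}g)$. Writing $\nabla^j f=\nabla^{j-1}(\nabla f)$, every surviving summand is of the form $(\nabla^{i}(\nabla f))(\nabla^{(k-1)-i}g)$ with $0\le i\le k-1$, i.e. it is one of the terms produced by $\nabla^{k-1}\big((\nabla f)\,g\big)$. Applying the product estimate just obtained, with $k-1$ in place of $k$ and $\nabla f$ in place of $f$, gives $\|[\nabla^k,f]g\|_{L^2}\lesssim\|\nabla f\|_{L^\infty}\|g\|_{H^{k-1}}+\|\nabla f\|_{H^{k-1}}\|g\|_{L^\infty}$, which is the asserted bound (the Sobolev norm of $f$ that actually appears is $\|\nabla f\|_{H^{k-1}}\lesssim\|f\|_{H^k}$).

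The argument involves no genuine obstacle: the only care needed is to keep the endpoints $j\in\{0,k\}$ out of the H\"older/Gagliardo--Nirenberg step and treat them by hand (they are the easy terms, requiring no interpolation), and to note that the Gagliardo--Nirenberg exponents used are admissible on $\mathbb R^2$, which they are since $0\le j/k\le 1$. No smallness assumption and no structure of the MHD system enter; the lemma is a self-contained product/commutator tool to be used in the nonlinear estimates of Sections 3 and 4.
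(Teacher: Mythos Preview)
Your argument is correct and self-contained. Note, however, that the paper does not supply a proof of this lemma at all: it is quoted as a standard tool, with attribution to \cite{KP1988,KPV1991}, and no argument is given in the text. So there is no ``paper's proof'' to compare against; your Leibniz--Gagliardo--Nirenberg derivation is one of the classical routes for integer $k$, while the cited references handle the general (fractional) case via paraproduct/Littlewood--Paley decomposition. Both lead to the same inequalities, and for the integer orders actually used in Sections~3--4 your elementary argument is entirely sufficient.

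One small remark: your proof of the commutator estimate produces the bound $\|\nabla f\|_{L^\infty}\|g\|_{H^{k-1}}+\|\nabla f\|_{H^{k-1}}\|g\|_{L^\infty}$, which you correctly observe is the sharp form. The lemma as printed has $\|f\|_{H^{k-1}}$ in the second term, which appears to be a typographical slip for $\|f\|_{H^k}$ (equivalently $\|\nabla f\|_{H^{k-1}}$); the version you prove is the one actually needed, and is the one consistent with the standard Kato--Ponce statement.
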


Next, we introduce the following Poincar\'{e} type inequality, which plays a crucial role in the proof of our main theorem.
\begin{lem}\label{lem2.2}
For any function $f\in H^k(\mathbb{R}^2)$ $(k\in \mathbb{N})$ satisfying the condition
\begin{align}\label{001}
\frac{1}{2\pi}\int_0^{2\pi}f(r, \theta)\md\theta=0
\end{align}
it holds that
\begin{align}\label{002}
\|f\|_{H^k(\mathbb{R}^2)}\lesssim\|\partial_\theta f\|_{H^k(\mathbb{R}^2)}.
\end{align}
\end{lem}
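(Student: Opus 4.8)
The plan is to reduce the estimate to a one‑dimensional Poincaré (Wirtinger) inequality on the circle $[0,2\pi]$ with respect to the angular variable $\theta$, applied fiberwise over each radius $r>0$, and then integrate in $r$ with the correct measure. First, for $k=0$ I would write the $L^2(\mathbb{R}^2)$ norm in polar coordinates as $\|f\|_{L^2}^2=\int_0^\infty\!\int_0^{2\pi}|f(r,\theta)|^2\,r\,\md\theta\,\md r$. For each fixed $r$, the hypothesis \eqref{001} says the function $\theta\mapsto f(r,\theta)$ has zero mean on $[0,2\pi]$, so by the classical Wirtinger inequality $\int_0^{2\pi}|f(r,\theta)|^2\md\theta\le\int_0^{2\pi}|\partial_\theta f(r,\theta)|^2\md\theta$ (the optimal constant is $1$ here because the first nonzero Fourier frequency on $[0,2\pi]$ is $1$). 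Multiplying by $r$ and integrating in $r$ gives $\|f\|_{L^2}\le\|\partial_\theta f\|_{L^2}$, which is \eqref{002} for $k=0$.

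For general $k$ I would argue by reducing to the $k=0$ case applied to derivatives of $f$. The key observation is that $\partial_\theta=x_1\partial_2-x_2\partial_1$ is the generator of rotations, hence it commutes with the Laplacian: $[\partial_\theta,\Delta]=0$, and more generally the group of rotations acts on $\mathbb{R}^2$ and commutes with any constant‑coefficient differential operator up to a rotation of the derivatives. Concretely, for any multi‑index $\alpha$ with $|\alpha|\le k$, the function $g:=\partial^\alpha f$ need not itself have vanishing angular mean, but one can instead work with the rotationally natural quantities. I would use that $\|f\|_{H^k}$ is comparable to $\sum_{j=0}^{k}\|\Delta^{j/2}f\|_{L^2}$ (defined via the Fourier transform / Riesz potentials), together with $\partial_\theta\Delta^{j/2}f=\Delta^{j/2}\partial_\theta f$, which holds because $\partial_\theta$ commutes with radial Fourier multipliers. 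Since $\int_0^{2\pi}f\,\md\theta=0$ for every $r$ implies (by commuting the angular average past the radial multiplier) that $\int_0^{2\pi}\Delta^{j/2}f\,\md\theta=0$ as well, the $k=0$ estimate applies to each $\Delta^{j/2}f$:
\begin{align*}
\|\Delta^{j/2}f\|_{L^2}\le\|\partial_\theta\Delta^{j/2}f\|_{L^2}=\|\Delta^{j/2}\partial_\theta f\|_{L^2}.
\end{align*}
Summing over $0\le j\le k$ yields $\|f\|_{H^k}\lesssim\|\partial_\theta f\|_{H^k}$, which is \eqref{002}.

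The main obstacle is the bookkeeping in the higher‑order step: ordinary Cartesian derivatives $\partial^\alpha f$ do \emph{not} inherit the zero‑angular‑mean property, since $\partial_1$ and $\partial_2$ do not commute with the angular averaging operator $\frac{1}{2\pi}\int_0^{2\pi}\cdot\,\md\theta$. This is why I would route the argument through rotation‑invariant objects — either the fractional Laplacians $\Delta^{j/2}$ (Fourier multipliers that are radial, hence commute both with $\partial_\theta$ and with angular averaging) or, equivalently, through the fact that $\partial_\theta$ commutes with $\Delta$ so that one may first peel off powers of $\Delta$ and only at the end invoke the scalar Wirtinger inequality. A mild technical point to handle is the behaviour near $r=0$: the vector field $\partial_\theta=x_1\partial_2-x_2\partial_1$ vanishes at the origin, but this causes no trouble because the weight $r\,\md r\,\md\theta$ is precisely the one making the fiberwise Wirtinger inequality integrate cleanly, and for $f\in H^k(\mathbb{R}^2)$ all the quantities involved are finite; a density argument with smooth compactly supported functions avoiding the origin, if desired, closes any remaining gap.
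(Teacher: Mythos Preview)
Your proposal is correct and shares the paper's core idea---route the higher-order estimate through rotation-invariant operators so that the zero-angular-mean hypothesis is preserved---but the execution differs. The paper first establishes the $L^2$ case (as you do) and then treats $\dot{H}^1$ separately by the polar decomposition $\|f\|_{\dot{H}^1}^2=\|\partial_r f\|_{L^2}^2+\|\tfrac{1}{r}\partial_\theta f\|_{L^2}^2$, observing that $\partial_r f$ inherits zero angular mean; only then does it invoke $[\Delta,\partial_\theta]=0$ to climb to general $k$. You instead bypass the $\dot{H}^1$ step entirely by working with the radial Fourier multipliers $\Delta^{j/2}$, using that angular averaging (the projection onto rotation-invariant functions) commutes with any radial multiplier, so $\Delta^{j/2}f$ retains zero angular mean and the $L^2$ Wirtinger bound applies directly at every level. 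Your route is slightly more abstract but more uniform (and incidentally extends to noninteger $k$); the paper's is more concrete and stays in physical-space polar coordinates. Your diagnosis of the obstacle---that Cartesian $\partial^\alpha$ does not preserve zero angular mean---is exactly the point, and both proofs resolve it the same way in spirit.
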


\begin{proof}
On one hand, according to the condition (\ref{001}), the standard Poincar\'{e} inequality gives
\begin{align*}
\|f\|_{L^2(\mathbb{R}^2)}\lesssim\|\partial_\theta f\|_{L^2(\mathbb{R}^2)}.
\end{align*}
On the other hand, note that
$\int_0^{2\pi}\partial_r f(r, \theta)\md\theta=0$, one has
\begin{align*}
\|f\|_{\dot{H}^1(\mathbb{R}^2)}^2
=&\|\partial_r f\|_{L^2(\mathbb{R}^2)}^2
+\|\frac{1}{r}\partial_\theta f\|_{L^2(\mathbb{R}^2)}^2\\
\lesssim&\|\partial_r\partial_\theta f\|_{L^2(\mathbb{R}^2)}^2
+\|\frac{1}{r}\partial_{\theta\theta}f\|_{L^2(\mathbb{R}^2)}^2\\
=&\|\partial_\theta f\|_{\dot{H}^1(\mathbb{R}^2)}^2.
\end{align*}
Therefore, using the commutant relation $[\Delta, \partial_\theta]=0$, we can deduce
(\ref{002}).
\end{proof}

Finally, we will outline the framework for the energy estimates.
For $s\in\mathbb{N}$ and a sufficiently small constant $0<\sigma<1$, we take $\sigma=\frac{1}{20}$ and set
\begin{align}
\nonumber E_0(t)=&\sup_{0\leq\tau\leq t}(1+\tau)^{-\sigma}\big(\|u(\tau)\|_{H^{2s+8}}^2+\|b(\tau)\|_{H^{2s+8}}^2\big)\\
&+\int_0^t(1+\tau)^{-1-\sigma}\big(\|u(\tau)\|_{H^{2s+8}}^2
+\|b(\tau)\|_{H^{2s+8}}^2\big)\md\tau\label{2.1}\\
\nonumber&+\int_0^t(1+\tau)^{-\sigma}\|\nabla b(\tau)\|_{H^{2s+8}}^2\md\tau,\\
\nonumber e_0(t)=&\sum_{m=0}^{s}\large\{\sup_{0\leq\tau\leq t}\big(\|u_\tau(\tau),b_\tau(\tau)\|_{\dot{H}^{2m}}^2
+\|\partial_\theta u(\tau),\partial_\theta b(\tau)\|_{\dot{H}^{2m}}^2
+\|u(\tau),b(\tau)\|_{\dot{H}^{2m+2}}^2\big)\\
&+\int_0^t\big(\|u_\tau(\tau),b_\tau(\tau)\|_{\dot{H}^{2m+1}}^2
+\|\partial_\theta u(\tau),\partial_\theta b(\tau)\|_{\dot{H}^{2m+1}}^2\big)\md\tau\big\}\label{2.2},\\
\nonumber e_1(t)=&\sum_{m=1}^{s-1}\large\{\sup_{0\leq\tau\leq t}(1+\tau)^2\big(\|u_\tau(\tau),b_\tau(\tau)\|_{\dot{H}^{2m}}^2
+\|\partial_\theta u(\tau),\partial_\theta b(\tau)\|_{\dot{H}^{2m}}^2
+\|u(\tau),b(\tau)\|_{\dot{H}^{2m+2}}^2\big)\\
&+\int_0^t(1+\tau)^2\big(\|u_\tau(\tau),b_\tau(\tau)\|_{\dot{H}^{2m+1}}^2
+\|\partial_\theta u(\tau),\partial_\theta b(\tau)\|_{\dot{H}^{2m+1}}^2\big)\md\tau\big\}\label{2.3}.
\end{align}

\section{Energy estimates}

The purpose of this section is to establish the uniform boundness of (\ref{2.1})-(\ref{2.3}).
Let's begin by estimating the highest order energy $E_0(t)$. It shows that the highest order norm
$H^{2s+8}(\mathbb{R}^2)$ of $u(t,\cdot)$ and $b(t,\cdot)$ will grow in the time evolution.
\begin{lem}\label{lem3.1}
From the definition in (\ref{2.1}), for $s\geq 2$, we have
\begin{align}\label{1}
E_0(t)\lesssim\varepsilon^2+E_0(t)e_0(t)^{\frac{1}{4}}e_1(t)^{\frac{1}{4}}+E_0(t)e_0(t)^{\frac{1}{2}}.
\end{align}
\end{lem}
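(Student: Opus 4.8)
The plan is to perform standard energy estimates on the perturbation system (\ref{eqs3}) at the level of $H^{2s+8}$, being careful to track the time weights $(1+\tau)^{\pm\sigma}$ that define $E_0(t)$. First I would apply $\nabla^k$ for $0\le k\le 2s+8$ to the $u$-equation and the $b$-equation in (\ref{eqs3}), pair with $\nabla^k u$ and $\nabla^k b$ respectively in $L^2$, and sum. The transport terms $u\cdot\nabla u$ and $u\cdot\nabla b$ contribute only commutator terms after using $\nabla\cdot u=0$; the magnetic stretching/tension pair $b\cdot\nabla b-\partial_\theta b$ (in the $u$-equation) and $b\cdot\nabla u-\partial_\theta u$ (in the $b$-equation) is handled by the usual MHD cancellation, where $\langle\nabla^k(b\cdot\nabla u),\nabla^k b\rangle+\langle\nabla^k(b\cdot\nabla b),\nabla^k u\rangle$ reduces to commutators, and the linear terms $-\partial_\theta b$, $-\partial_\theta u$ likewise cancel against each other because $\partial_\theta$ is skew-adjoint and commutes with $\nabla^k$ only up to lower-order rotation terms (these lower-order pieces are themselves $\dot H^{2s+7}$-type and absorbable). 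The pressure terms drop by incompressibility. The one genuinely dissipative term is $\langle\nabla^k\Delta b,\nabla^k b\rangle=-\|\nabla b\|_{\dot H^k}^2$, which after summation gives $-\|\nabla b\|_{H^{2s+8}}^2$ — exactly the third integral in (\ref{2.1}).

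Having arrived at $\frac{d}{dt}\big(\|u\|_{H^{2s+8}}^2+\|b\|_{H^{2s+8}}^2\big)+c\|\nabla b\|_{H^{2s+8}}^2\lesssim \mathcal N$, where $\mathcal N$ collects the commutator and rotation remainders, I multiply by $(1+t)^{-\sigma}$, use $\frac{d}{dt}\big((1+t)^{-\sigma}X\big)=(1+t)^{-\sigma}\dot X-\sigma(1+t)^{-1-\sigma}X$, and integrate in time from $0$ to $t$. The boundary term at $\tau=0$ is bounded by $\varepsilon^2$ via (\ref{**}); the term $\sigma\int_0^t(1+\tau)^{-1-\sigma}X\,d\tau$ is precisely the second piece of $E_0(t)$ and appears with a favorable sign, so it can be kept on the left (or, more conveniently, one just notes it is $\le E_0(t)$ and carries it along). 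What remains is to show $\int_0^t(1+\tau)^{-\sigma}\mathcal N\,d\tau\lesssim E_0(t)e_0(t)^{1/4}e_1(t)^{1/4}+E_0(t)e_0(t)^{1/2}$.

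The main obstacle — and the heart of the lemma — is estimating $\int_0^t(1+\tau)^{-\sigma}\mathcal N\,d\tau$ with exactly this trilinear structure. Each term of $\mathcal N$ is (schematically) $\int \nabla^j(\text{low}) \cdot \nabla^{2s+8-j+1}(\text{high})\cdot \nabla^{2s+8}(\cdot)$ with one derivative saved by the commutator. The strategy is: put the highest-order factor in $L^2$ and bound its $H^{2s+8}$ norm by $(1+\tau)^{\sigma/2}E_0(t)^{1/2}$; for the remaining two factors use Gagliardo–Nirenberg interpolation between a low Sobolev norm controlled by $e_0(t)$ (an $L^\infty$-type or $\dot H^3$-type norm, using (\ref{002}) from Lemma \ref{lem2.2} to convert $\|u\|_{\dot H^3}$ into $\|\partial_\theta u\|_{\dot H^3}$, which $e_0,e_1$ control) and the $H^{2s+8}$ norm controlled by $E_0(t)$. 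The delicate bookkeeping is the split of the interpolation exponents so that: (i) the total power of $E_0(t)$ is exactly one, (ii) the leftover powers of $e_0(t)$ and $e_1(t)$ come out as $1/4,1/4$ in the terms involving $\partial_\theta u$ decay (using the $(1+\tau)^2$ weight in $e_1$ to beat the $(1+\tau)^{-\sigma}$ and supply integrability via Cauchy–Schwarz in $\tau$) and as $0,1/2$ in the purely $b$-dissipation terms (where the $\|\nabla b\|_{H^{2s+8}}$ from $E_0(t)$ is available instead of a time weight). Concretely, for a term where the low factor carries $\partial_\theta$-decay one writes $(1+\tau)^{-\sigma}=(1+\tau)^{-1-\sigma}\cdot(1+\tau)$, pairs $(1+\tau)^{-1-\sigma}\|\text{high}\|^2$ with the $E_0$-integral and $(1+\tau)\|\partial_\theta(\cdot)\|$-type factors with $e_0^{1/4}e_1^{1/4}$; for a term controllable by the $b$-diffusion one pairs $\|\nabla b\|_{H^{2s+8}}$ (from $E_0$) with $\|b\|_{H^{2s+8}}$ (from $E_0$) leaving $e_0^{1/2}$ from the low factor. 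The hypothesis $s\ge 2$ enters to guarantee $2s+8$ is large enough that all the interpolation exponents land in $[0,1]$ and the needed embeddings $H^3(\mathbb R^2)\hookrightarrow L^\infty$ etc. hold with room to spare; one checks this inequality-by-inequality at the end.
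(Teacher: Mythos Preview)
Your overall strategy matches the paper's: differentiate (\ref{eqs3}) at order $2s+8$, pair, exploit the MHD cancellation, multiply by $(1+t)^{-\sigma}$, then split the time weight in the commutator integrals to land on $E_0\,e_0^{1/4}e_1^{1/4}$ (for the $\|\nabla u\|_{L^\infty}$ terms, via Lemma~\ref{lem2.2}) and $E_0\,e_0^{1/2}$ (for the $b$-terms, via the dissipative integral $\int(1+\tau)^{-\sigma}\|\nabla b\|_{H^{2s+8}}^2$). That part is fine.

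There is, however, a genuine gap in your treatment of the linear terms $-\partial_\theta b$ and $-\partial_\theta u$. You claim $\partial_\theta$ commutes with $\nabla^k$ ``only up to lower-order rotation terms'' that are ``$\dot H^{2s+7}$-type and absorbable.'' This is wrong on both counts. A direct computation gives $[\partial_1,\partial_\theta]=-\partial_2$ and $[\partial_2,\partial_\theta]=\partial_1$, so $[\partial^\alpha,\partial_\theta]$ is a differential operator of the \emph{same} order $|\alpha|$, not lower. The resulting remainder would be of size $\|u\|_{\dot H^{2s+8}}\|b\|_{\dot H^{2s+8}}$, which after multiplying by $(1+t)^{-\sigma}$ and integrating gives $\int_0^t(1+\tau)^{-\sigma}\|u,b\|_{\dot H^{2s+8}}^2\,d\tau$; this is not contained in $E_0(t)$ (whose integral carries weight $(1+\tau)^{-1-\sigma}$) and cannot be written as $\varepsilon^2$ plus trilinear terms, so the estimate would not close.

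The correct mechanism, used in the paper, is the exact commutation $[\Delta,\partial_\theta]=0$. Working with the $\dot H^{2s+8}$ inner product $\langle f,g\rangle_{\dot H^{2s+8}}=\langle(-\Delta)^{2s+8}f,g\rangle_{L^2}$ (equivalently, applying $\Delta^{s+4}$ rather than individual $\partial^\alpha$), one gets
\[
\langle u,\partial_\theta b\rangle_{\dot H^{2s+8}}+\langle b,\partial_\theta u\rangle_{\dot H^{2s+8}}=0
\]
directly from skew-adjointness of $\partial_\theta$ and $[\Delta,\partial_\theta]=0$, with \emph{no} remainder. Once you make this correction, the rest of your outline goes through. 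One minor point: in the $I_1$-type term the paper does not interpolate the two $\|u\|_{\dot H^{2s+8}}$ factors against low norms as you describe; rather both go directly into $E_0$ (one via the sup, one via the $(1+\tau)^{-1-\sigma}$ integral), and only $\|\nabla u\|_{L^\infty}\lesssim\|\partial_\theta u\|_{\dot H^1}^{1/2}\|\partial_\theta u\|_{\dot H^3}^{1/2}$ is split between $e_0^{1/4}$ and $e_1^{1/4}$ via H\"older in time with exponents $(2,\infty,4,4)$.
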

\begin{proof}
Applying $\Delta^{2s+8}$ to system (\ref{eqs3}) and taking inner product with $u$ and $b$, respectively.
Then, adding the results up and multiplying the time-weight $(1+t)^{-\sigma}$, we arrive at
\begin{align}\label{*}
\begin{aligned}
&\frac{1}{2}\frac{\md}{\md t}(1+t)^{-\sigma}(\|u\|_{\dot{H}^{2s+8}}^2+\|b\|_{\dot{H}^{2s+8}}^2)
+\frac{\sigma}{2}(1+t)^{-1-\sigma}(\|u\|_{\dot{H}^{2s+8}}^2+\|b\|_{\dot{H}^{2s+8}}^2)\\
&+(1+t)^{-\sigma}\|\nabla b\|_{\dot{H}^{2s+8}}^2\\
=&-(1+t)^{-\sigma}\langle\nabla^{2s+8}u,[\nabla^{2s+8}, u\cdot\nabla]u\rangle
+(1+t)^{-\sigma}\langle\nabla^{2s+8}u,[\nabla^{2s+8}, b\cdot\nabla]b\rangle\\
&-(1+t)^{-\sigma}\langle\nabla^{2s+8}b,[\nabla^{2s+8}, u\cdot\nabla]b\rangle
+(1+t)^{-\sigma}\langle\nabla^{2s+8}b,[\nabla^{2s+8}, b\cdot\nabla]u\rangle\\
:=&\sum_{i=1}^4I_i.
\end{aligned}
\end{align}
Here, we employed incompressibility conditions and the commutant relation $[\Delta, \partial_\theta]=0$.

For the term $I_1$, using H\"{o}lder's inequality, the commutator estimate, Sobolev imbedding theorem
and Lemma \ref{lem2.2}, we have
\begin{align*}
\begin{aligned}
|I_1|
\lesssim&(1+t)^{-\sigma}\|u\|_{\dot{H}^{2s+8}}^2\|\nabla u\|_{L^{\infty}}\\
\lesssim&(1+t)^{-\sigma}\|u\|_{\dot{H}^{2s+8}}^2\|\nabla u\|_{L^2}^{\frac{1}{2}}
\|\nabla^3 u\|_{L^2}^{\frac{1}{2}}\\
=&C(1+t)^{-\frac{1}{2}-\frac{\sigma}{2}}\|u\|_{\dot{H}^{2s+8}}
(1+t)^{-\frac{\sigma}{2}}\|u\|_{\dot{H}^{2s+8}}
\|\nabla u\|_{L^2}^{\frac{1}{2}}
(1+t)^{\frac{1}{2}}\|\nabla^3 u\|_{L^2}^{\frac{1}{2}}
\end{aligned}
\end{align*}
Hence,
\begin{align}\label{d1}
\begin{aligned}
\int_0^t|I_1|\md\tau
\lesssim& \sup_{0\leq\tau\leq t}(1+\tau)^{-\frac{\sigma}{2}}\|u\|_{H^{2s+8}}
\big(\int_0^t(1+\tau)^{-1-\sigma}\|u\|_{H^{2s+8}}^2\md\tau\big)^{\frac{1}{2}}\\
&\times\big(\int_0^t\|\partial_\theta u\|_{\dot{H}^1}^2\md\tau\big)^{\frac{1}{4}}
\big(\int_0^t(1+\tau)^2\|\partial_\theta u\|_{\dot{H}^3}^2\md\tau\big)^{\frac{1}{4}}\\
\lesssim&E_0(t)e_0(t)^{\frac{1}{4}}e_1(t)^{\frac{1}{4}}.
\end{aligned}
\end{align}

Similarly, for $I_2$, one has
\begin{align*}
\int_0^t|I_2|\md\tau
\lesssim&\int_0^t(1+\tau)^{-\sigma}\|u\|_{\dot{H}^{2s+8}}\|b\|_{\dot{H}^{2s+8}}\|\nabla \partial_\theta b\|_{H^2}\md\tau\\
\lesssim&\sup_{0\leq\tau\leq t}(1+\tau)^{-\frac{\sigma}{2}}\|u\|_{H^{2s+8}}
\big(\int_0^t(1+\tau)^{-\sigma}\|\nabla b\|_{H^{2s+7}}^2\md\tau\big)^{\frac{1}{2}}
\big(\int_0^t\|\nabla \partial_\theta b\|_{H^2}^2\md\tau\big)^{\frac{1}{2}}\\
\lesssim&E_0(t)e_0(t)^{\frac{1}{2}}.
\end{align*}

For $I_3$ and $I_4$, using H\"{o}lder's inequality and Lemma \ref{lem2.1}, we have
\begin{align*}
|I_3|+|I_4|
\lesssim&(1+t)^{-\sigma}(\|u\|_{\dot{H}^{2s+8}}\|b\|_{\dot{H}^{2s+8}}\|\nabla b\|_{L^{\infty}}
+\|b\|_{\dot{H}^{2s+8}}^2\|\nabla u\|_{L^{\infty}}).
\end{align*}
Thus, taking the $L^1$ norm over $[0,t)$ and employing similar techniques as to $I_1$ and $I_2$, we arrive at
\begin{align*}
\int_0^t|I_3|+|I_4|\md\tau
\lesssim E_0(t)e_0(t)^{\frac{1}{2}}.
\end{align*}

Integrating (\ref{*}) respect to time and inserting $I_1-I_4$, we complete the estimate of $E_0(t)$.
\end{proof}

The proof of the decay estimate $e_1(t)$ relies crucially on the following lemma.
Taking advantage of the damped wave type system (\ref{3.1}), we shall establish the lower-order energy estimate $e_0(t)$.
\begin{lem}\label{lem3.3}
From the definition in (\ref{2.2}), for $s\geq 2$ and $0<\sigma<1$, we have
\begin{align}\label{2}
\begin{aligned}
e_0(t)\lesssim&\varepsilon^2+e_0(t)^{\frac{3}{2}}
+E_0(t)^{\frac{1}{6}}e_0(t)^{\frac{7}{6}}e_1(t)^{\frac{1}{6}}
+E_0(t)^{\frac{1}{3}}e_0(t)e_1(t)^{\frac{1}{6}}
+E_0(t)^{\frac{2}{3}}e_0(t)^{\frac{2}{3}}e_1(t)^{\frac{1}{6}}\\
&+E_0(t)^{\frac{1}{2}}e_0(t)^{\frac{3}{4}}e_1(t)^{\frac{1}{4}}.
\end{aligned}
\end{align}
\end{lem}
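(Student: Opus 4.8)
The plan is to perform energy estimates directly on the damped wave type system (\ref{3.1}), summing over the derivative levels $\Delta^m$ for $0\leq m\leq s$. For each fixed $m$, I would apply $\Delta^m$ to both equations in (\ref{3.1}) and test the first equation against $u_t+u$ (and similarly the second against $b_t+b$), or rather against a combination that produces the three pieces appearing in $e_0$: the kinetic-type term $\|u_\tau,b_\tau\|_{\dot H^{2m}}^2$, the background-rotation term $\|\partial_\theta u,\partial_\theta b\|_{\dot H^{2m}}^2$, and the elliptic-gain term $\|u,b\|_{\dot H^{2m+2}}^2$. The damping $-\Delta u_t$ in the wave equation supplies the dissipation $\|u_\tau,b_\tau\|_{\dot H^{2m+1}}^2$ that is integrated in time, and the $-\partial_{\theta\theta}$ term together with Lemma \ref{lem2.2} (the Poincar\'e inequality under the zero-th Fourier mode condition (\ref{1.2}), which persists by the symmetry (\ref{sym})) converts control of $\partial_\theta u$ into control of $u$ itself. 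The incompressibility conditions kill the pressure terms $\nabla q_1,\nabla q_2$ after testing against divergence-free fields, and the commutator $[\Delta,\partial_\theta]=0$ lets all these manipulations go through cleanly. This is the standard Ye--Yin / Zhou--Zhu time-weighted scheme adapted to the non-constant background.

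The core of the work is the right-hand side of (\ref{3.1}), namely the terms built from $F=b\cdot\nabla b-u\cdot\nabla u$ and $G=b\cdot\nabla u-u\cdot\nabla b$, which after testing produce integrals of the schematic form $\langle \Delta^m(u_t\text{ or }u), \Delta^m(F_t),\ \Delta^m(\Delta F),\ \Delta^m(\partial_\theta G)\rangle$ and the analogous $b$-terms. Each of these I would expand by Lemma \ref{lem2.1} (product and commutator estimates), putting the highest number of derivatives on one factor and an $L^\infty$ norm — controlled by Sobolev embedding and interpolation in $\mathbb R^2$ — on the other. The delicate point is bookkeeping: since $m$ runs up to $s$, a factor can carry as many as $2s+1$ derivatives, so I must check that $2s+1\leq 2s+8$, i.e. these genuinely lie inside the $E_0$ scale, while the complementary low-derivative factors must be distributed between the $e_0$ scale (no weight) and the $e_1$ scale (weight $(1+\tau)^2$), with the available time-weight $(1+\tau)^0$ on the $e_0$ side. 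The time-integration is then done by H\"older in $\tau$, splitting the weight $1=(1+\tau)^{-a}\cdot(1+\tau)^{a}$ so that factors drawn from $e_1$ absorb the $(1+\tau)^2$ and factors from $E_0$ absorb the negative power $(1+\tau)^{-1-\sigma}$; matching exponents is exactly what forces the fractional powers $\tfrac16,\tfrac13,\tfrac23,\tfrac12,\tfrac34,\tfrac14$ appearing in (\ref{2}). The purely low-order terms where every factor sits in $e_0$ give the cubic term $e_0(t)^{3/2}$.

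The main obstacle I anticipate is the term coming from $-\Delta F$ in the first equation, more precisely the piece $\langle \Delta^{m+1}u,\ \Delta^{m+1}(b\cdot\nabla b)\rangle$ flagged in the introduction: here the worst distribution places $2m+3$ derivatives on $b$, which for $m$ near $s$ exceeds what $e_0$ and $e_1$ alone control, so one must instead peel off a factor into the $E_0$ scale and compensate with a low-order, highly time-decaying factor from $e_1$. Getting the interpolation index and the power of $(1+\tau)$ to balance simultaneously — so that the resulting product is exactly $E_0(t)^{1/6}e_0(t)^{7/6}e_1(t)^{1/6}$ (and its siblings) and no uncontrolled growth remains — is the subtle step; this is precisely the calculation referred to as equation (\ref{d2}). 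Once every RHS contribution has been bounded by one of the six products on the right of (\ref{2}), the lemma follows by collecting terms, summing over $m=0,\dots,s$, and using the initial bound (\ref{**}) to produce the $\varepsilon^2$.
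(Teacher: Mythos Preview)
Your overall strategy matches the paper's: perform two energy identities on the damped wave system (\ref{3.1}) at each level $m$, combine them with a large constant to get positivity, and then estimate the quadratic forcing via Lemma~\ref{lem2.1}, Sobolev embedding, interpolation, and the Poincar\'e inequality of Lemma~\ref{lem2.2}. Two points of your sketch are inaccurate, though, and would cause trouble if you carried them out literally.

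First, the pair of test functions is not $u_t+u$ but rather $\Delta^{2m}u_t$ and $-\Delta^{2m+1}u$ (and similarly for $b$). Testing with $-\Delta u$ at the $\Delta^{2m}$ level is what produces the dissipative term $\|\partial_\theta u\|_{\dot H^{2m+1}}^2$ at one derivative \emph{above} the energy level, matching the damping contribution $\|\nabla u_t\|_{\dot H^{2m}}^2$; testing with $u$ itself would leave you one derivative short and the two dissipative integrals in $e_0$ would not sit at the same order. Second, you have misidentified the role of (\ref{d2}). That display belongs to Lemma~\ref{lem3.4} (the $e_1$ estimate, where the extra weight $(1+\tau)^2$ makes the balancing genuinely delicate), not to the present lemma. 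In the $e_0$ proof the same spatial term $\langle \Delta^{m+1}u,\Delta^{m+1}(b\cdot\nabla b)\rangle$ appears without a time weight, is handled by a much simpler interpolation (the paper's $N_{221}$), and contributes $E_0(t)^{2/3}e_0(t)^{2/3}e_1(t)^{1/6}$, not $E_0(t)^{1/6}e_0(t)^{7/6}e_1(t)^{1/6}$. The latter exponent actually arises from the $F_t$ and $\partial_\theta G$ pieces (the paper's $N_{21}$), where one integrates by parts once to move a derivative onto $u$ and then interpolates $\|u\|_{\dot H^{2m+2}}$ between $\|\partial_\theta u\|_{\dot H^{2m+1}}$ and a high norm in $E_0$. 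With these two corrections your plan is exactly the paper's.
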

\begin{proof}
Recall the damped wave type system (\ref{3.1}):
\begin{eqnarray*}
\begin{cases}
{ \begin{array}{ll} u_{tt}-\Delta u_t-\partial_{\theta\theta}u+\nabla q_1=F_t-\Delta F-\partial_\theta G, \\
b_{tt}-\Delta b_t-\partial_{\theta\theta}b+\nabla q_2=G_t-\partial_\theta F,
 \end{array} }
\end{cases}
\end{eqnarray*}
where
$F=b\cdot\nabla b-u\cdot\nabla u$,
$G=b\cdot\nabla u-u\cdot\nabla b$
are nonlinear terms.

Next, we will take five steps to this proof.\\
\textbf{Step 1}\quad

Taking the $L^2$ inner product of (\ref{3.1}) with $\Delta^{2m} u_t$, $\Delta^{2m} b_t$ respectively, and adding the results, one has
\begin{align}\label{3.2}
\begin{aligned}
&\frac{1}{2}\frac{\md}{\md t}\big(\|u_t,b_t\|_{\dot{H}^{2m}}^2+\|\partial_\theta u,\partial_\theta b\|_{\dot{H}^{2m}}^2\big)
+\|\nabla u_t, \nabla b_t\|_{\dot{H}^{2m}}^2\\
=&\langle\Delta^{2m}u_t, F_t-\Delta F-\partial_\theta G\rangle+\langle\Delta^{2m}b_t, G_t-\partial_\theta F\rangle
:=N_1.
\end{aligned}
\end{align}
\textbf{Step 2}\quad

To obtain the positive sign for
$\|\partial_\theta u, \partial_\theta b\|_{\dot{H}^{2m+1}}$, we repeat the $L^2$ inner product of (\ref{3.1}) with $-\Delta^{2m+1} u$ and $-\Delta^{2m+1} b$, respectively. Adding the results, we get
\begin{align}\label{3.3}
\begin{aligned}
&\frac{\md}{\md t}\big(-\langle\Delta^{2m+1}u, u_t\rangle
-\langle\Delta^{2m+1}b,b_t\rangle
+\frac{1}{2}\|u,b\|_{\dot{H}^{2m+2}}^2\big)
-\|u_t,b_t\|_{\dot{H}^{2m+1}}^2+\|\partial_\theta u,\partial_\theta b\|_{\dot{H}^{2m+1}}^2\\
=&-\langle\Delta^{2m+1}u, F_t-\Delta F-\partial_\theta G\rangle-\langle\Delta^{2m+1}b, G_t-\partial_\theta F\rangle
:=N_2.
\end{aligned}
\end{align}
\textbf{Step 3}\quad

Multiplying (\ref{3.2}) by a suitable large number and adding (\ref{3.3}),
thanks to Young's inequality, we deduce that
\begin{align}\label{3.5}
\begin{aligned}
\frac{\md}{\md t}
\big(\|u_t,b_t\|_{\dot{H}^{2m}}^2+\|\partial_\theta u,\partial_\theta b\|_{\dot{H}^{2m}}^2
+\|u_t, b_t\|_{\dot{H}^{2m+1}}^2\big)
+\|\partial_\theta u,\partial_\theta b\|_{\dot{H}^{2m+1}}^2
\lesssim N_1+N_2.
\end{aligned}
\end{align}
Here, we also used the following inequalities:
\begin{align*}
\langle\Delta^{2m+1}u, u_t\rangle
\leq& C\|u\|_{\dot{H}^{2m+2}}\|u_t\|_{\dot{H}^{2m}}
\leq \epsilon\|u\|_{\dot{H}^{2m+2}}^2+C(\epsilon)\|u_t\|_{\dot{H}^{2m}}^2,\\
\langle\Delta^{2m+1}b, b_t\rangle
\leq& C\|b\|_{\dot{H}^{2m+2}}\|b_t\|_{\dot{H}^{2m}}
\leq \epsilon\|b\|_{\dot{H}^{2m+2}}^2+C(\epsilon)\|b_t\|_{\dot{H}^{2m}}^2,
\end{align*}
for a certain small constant $\epsilon>0$. $C(\epsilon)$ is a positive constant depending only on $\epsilon$.\\
\textbf{Step 4}\quad

We shall estimate each term on the right-hand side of (\ref{3.5}).
First, we divide $N_1$ into two parts:
\begin{align*}
\begin{aligned}
N_{11}=&\langle\Delta^{2m}u_t, F_t-\partial_\theta G\rangle+\langle\Delta^{2m}b_t, G_t-\partial_\theta F\rangle,\\
N_{12}=&\langle\Delta^{2m}u_t, -\Delta F\rangle.
\end{aligned}
\end{align*}
For the first part of $N_1$, namely
\begin{align*}
\begin{aligned}
N_{11}
=&\langle\Delta^mu_t, \Delta^m(b\cdot\nabla b)_t\rangle
-\langle\Delta^mu_t,\Delta^m(u\cdot\nabla u)_t\rangle\\
&-\langle\Delta^mu_t, \Delta^m\partial_\theta(b\cdot\nabla u)\rangle
+\langle\Delta^mu_t,\Delta^m\partial_\theta(u\cdot\nabla b)\rangle\\
&+\langle\Delta^mb_t, \Delta^m(b\cdot\nabla u)_t\rangle
-\langle\Delta^mb_t,\Delta^m(u\cdot\nabla b)_t\rangle\\
&-\langle\Delta^mb_t, \Delta^m\partial_\theta(b\cdot\nabla b)\rangle
+\langle\Delta^mb_t,\Delta^m\partial_\theta(u\cdot\nabla u)\rangle,
\end{aligned}
\end{align*}
we only take care of the first term. In fact, since $u_t$, $b_t$, $\partial_\theta u$ and $\partial_\theta b$ have the same regularity on the left-hand side of (\ref{3.5}), the remaining terms of $N_{11}$ can be treated in the same way with some modifications.
To overcome the lack of the space-time $L^2$ norm for $u_t$, $b_t$, $\partial_\theta u$ and $\partial_\theta b$, we handle the first term in two cases: $m\geq 1$ and $m=0$.
For $m\geq 1$,
using the divergence-free condition $\nabla\cdot b=0$, the product estimate, Sobolev imbedding inequality and Lemma \ref{lem2.2}, we have
\begin{align*}
\langle\Delta^mu_t, \Delta^m(b\cdot\nabla b)_t\rangle
=&\langle\nabla^{2m}u_t, \nabla^{2m+1}(b\otimes b)_t\rangle\\
\lesssim&\|u_t\|_{\dot{H}^{2m}}\big(\|b_t\|_{L^\infty}\|b\|_{\dot{H}^{2m+1}}
+\|b\|_{L^\infty}\|b_t\|_{\dot{H}^{2m+1}}\big)\\
\lesssim&\|u_t\|_{\dot{H}^{2m}}\big(\|b_t\|_{H^2}\|\partial_\theta b\|_{\dot{H}^{2m+1}}
+\|\partial_\theta b\|_{H^2}\|b_t\|_{\dot{H}^{2m+1}}\big).
\end{align*}
Hence,
\begin{align*}
&\int_0^t\langle\Delta^mu_\tau, \Delta^m(b\cdot\nabla b)_\tau\rangle\md\tau\\
\lesssim&\sup_{0\leq\tau\leq t}\|b_\tau\|_{H^2}
(\int_0^t\|u_\tau\|_{\dot{H}^{2m}}^2\md\tau)^{\frac{1}{2}}
(\int_0^t\|\partial_\theta b\|_{\dot{H}^{2m+1}}\md\tau)^{\frac{1}{2}}\\
&+\sup_{0\leq\tau\leq t}\|\partial_\theta b\|_{H^2}
(\int_0^t\|u_\tau\|_{\dot{H}^{2m}}^2\md\tau)^{\frac{1}{2}}
(\int_0^t\|b_\tau\|_{\dot{H}^{2m+1}}\md\tau)^{\frac{1}{2}}\\
\lesssim& e_0(t)^{\frac{3}{2}},
\end{align*}
where we have used the fact that
$$\|u_t\|_{\dot{H}^{2m}}^2\lesssim\|u_t\|_{\dot{H}^{2m-1}}\|u_t\|_{\dot{H}^{2m+1}}.$$
For $m=0$, using H\"{o}lder's inequality, interpolation inequalities and Lemma \ref{lem2.2}, we obtain
\begin{align*}
\begin{aligned}
&\langle u_t, (b\cdot\nabla b)_t\rangle\\
=&\langle u_t, b_t\cdot\nabla b+b\cdot\nabla b_t\rangle\\
\lesssim&\|u_t\|_{L^4}\|b_t\|_{L^4}\|\nabla b\|_{L^2}
+\|u\|_{L^4}\|b_t\|_{L^4}\|\nabla b_t\|_{L^2}\\
\lesssim&\|u_t\|_{L^2}^{\frac{1}{2}}\|\nabla u_t\|_{L^2}^{\frac{1}{2}}
\|b_t\|_{L^2}^{\frac{1}{2}}\|\nabla b_t\|_{L^2}^{\frac{1}{2}}\|\nabla \partial_\theta b\|_{L^2}
+\|u_t\|_{L^2}^{\frac{1}{2}}\|\nabla u_t\|_{L^2}^{\frac{1}{2}}
\|\partial_\theta b\|_{L^2}^{\frac{1}{2}}\|\nabla \partial_\theta b\|_{L^2}^{\frac{1}{2}}\|\nabla b_t\|_{L^2}.
\end{aligned}
\end{align*}
Thus,
\begin{align*}
&\int_0^t\langle u_\tau, (b\cdot\nabla b)_\tau\rangle\md\tau\\
\lesssim&\sup_{0\leq\tau\leq t}\|u_\tau\|_{L^2}^{\frac{1}{2}}\|b_\tau\|_{L^2}^{\frac{1}{2}}
\big(\int_0^t\|u_\tau\|_{\dot{H}^1}^2\md\tau\big)^{\frac{1}{4}}
\big(\int_0^t\|b_\tau\|_{\dot{H}^1}^2\md\tau\big)^{\frac{1}{4}}
\big(\int_0^t\|\partial_\theta b\|_{\dot{H}^1}^2\md\tau\big)^{\frac{1}{2}}\\
&+\sup_{0\leq\tau\leq t}\|u_\tau\|_{L^2}^{\frac{1}{2}}
\|\partial_\theta b\|_{L^2}^{\frac{1}{2}}
\big(\int_0^t\|u_\tau\|_{\dot{H}^1}^2\md\tau\big)^{\frac{1}{4}}
\big(\int_0^t\|\partial_\theta b\|_{\dot{H}^1}^2\md\tau\big)^{\frac{1}{4}}
\big(\int_0^t\|b_\tau\|_{\dot{H}^1}^2\md\tau\big)^{\frac{1}{2}}\\
\lesssim& e_0(t)^{\frac{3}{2}}.
\end{align*}
Combining the above two cases, we finally obtain the estimate of the first term of $N_{11}$:
\begin{align}\label{N11}
\int_0^t\langle\Delta^mu_\tau, \Delta^m(b\cdot\nabla b)_\tau\rangle\md\tau
\lesssim e_0(t)^{\frac{3}{2}}.
\end{align}
The remaining terms of $N_{11}$ can be treated in a similar way. We will omit the details.

For the second part of $N_1$, namely
\begin{align*}
N_{12}
=\langle\nabla^{2m+1}u_t, \nabla^{2m+1}(-u\cdot\nabla u+b\cdot\nabla b)\rangle
:=N_{121}+N_{122}.
\end{align*}
Using the divergence-free condition $\nabla\cdot u=0$, Lemma \ref{lem2.1}, Sobolev imbedding inequality and Lemma \ref{lem2.2}, one has
\begin{align*}
\begin{aligned}
N_{121}
=&-\langle\nabla^{2m+1}u_t, \nabla^{2m+2}(u\otimes u)\rangle\\
\lesssim&\|u_t\|_{\dot{H}^{2m+1}}\|u\|_{L^\infty}\|u\|_{\dot{H}^{2m+2}}\\
\lesssim&\|u_t\|_{\dot{H}^{2m+1}}\|\partial_\theta u\|_{L^2}^{\frac{2}{3}}\|\partial_\theta u\|_{\dot{H}^3}^{\frac{1}{3}}
\|\partial_\theta u\|_{\dot{H}^{2m+1}}^{\frac{2}{3}}
\|u\|_{\dot{H}^{2m+4}}^{\frac{1}{3}}\\
=&C(1+t)^{-\frac{1}{3}+\frac{\sigma}{6}}
\|u_t\|_{\dot{H}^{2m+1}}
\|\partial_\theta u\|_{L^2}^{\frac{2}{3}}
(1+t)^{\frac{1}{3}}\|\partial_\theta u\|_{\dot{H}^3}^{\frac{1}{3}}
\|\partial_\theta u\|_{\dot{H}^{2m+1}}^{\frac{2}{3}}
(1+t)^{-\frac{\sigma}{6}}\|u\|_{\dot{H}^{2m+4}}^{\frac{1}{3}}.
\end{aligned}
\end{align*}
Like the estimate of $N_{121}$, $N_{122}$ can be bounded by
\begin{align*}
\begin{aligned}
N_{122}
=&\langle\nabla^{2m+1}u_t, \nabla^{2m+2}(b\otimes b)\rangle\\
\lesssim&\|u_t\|_{\dot{H}^{2m+1}}\|b\|_{L^\infty}\|b\|_{\dot{H}^{2m+2}}\\
\lesssim&\|u_t\|_{\dot{H}^{2m+1}}\|\partial_\theta b\|_{L^2}^{\frac{2}{3}}
\|\partial_\theta b\|_{\dot{H}^3}^{\frac{1}{3}}\|b\|_{\dot{H}^{2m+2}}\\
=&C(1+t)^{-\frac{1}{3}+\frac{\sigma}{3}}
\|u_t\|_{\dot{H}^{2m+1}}
\|\partial_\theta b\|_{L^2}^{\frac{2}{3}}
(1+t)^{\frac{1}{3}}\|\partial_\theta b\|_{\dot{H}^3}^{\frac{1}{3}}
\|b\|_{\dot{H}^{2m+2}}^{\frac{1}{3}}
(1+t)^{-\frac{\sigma}{3}}\|b\|_{\dot{H}^{2m+2}}^{\frac{2}{3}}.
\end{aligned}
\end{align*}
By combining $N_{121}$ and $N_{122}$ and applying H\"{o}lder's inequality in the variable $t$, we obtain
\begin{align}\label{N12}
\begin{aligned}
&\int_0^tN_{12}\md\tau\\
\lesssim&\sup_{0\leq\tau\leq t}(1+\tau)^{-\frac{\sigma}{6}}\|u\|_{H^{2s+4}}^{\frac{1}{3}}
\|\partial_\theta u\|_{L^2}^{\frac{2}{3}}
\big(\int_0^t\|u_\tau\|_{\dot{H}^{2m+1}}^2\md\tau\big)^{\frac{1}{2}}
\big(\int_0^t(1+\tau)^2\|\partial_\theta u\|_{\dot{H}^3}^2\md\tau\big)^{\frac{1}{6}}\\
&\times\big(\int_0^t\|\partial_\theta u\|_{\dot{H}^{2m+1}}^2\md\tau\big)^{\frac{1}{3}}\\
&+\sup_{0\leq\tau\leq t}
\|\partial_\theta b\|_{L^2}^{\frac{2}{3}}\|b\|_{\dot{H}^{2m+2}}^{\frac{1}{3}}
\big(\int_0^t\|u_\tau\|_{\dot{H}^{2m+1}}^2\md\tau\big)^{\frac{1}{2}}
\big(\int_0^t(1+\tau)^2\|\partial_\theta b\|_{\dot{H}^3}^2\md\tau\big)^{\frac{1}{6}}\\
&\times\big(\int_0^t\|\nabla b\|_{H^{2s+1}}^2\md\tau\big)^{\frac{1}{3}}\\
\lesssim&E_0(t)^{\frac{1}{6}}e_0(t)^{\frac{7}{6}}e_1(t)^{\frac{1}{6}}
+E_0(t)^{\frac{1}{3}}e_0(t)e_1(t)^{\frac{1}{6}}.
\end{aligned}
\end{align}

Next, for the term $N_2$, we also divide it into two parts:
\begin{align*}
N_{21}=&-\langle\Delta^{2m+1}u, F_t-\partial_\theta G\rangle-\langle\Delta^{2m+1}b, G_t-\partial_\theta F\rangle,\\
N_{22}=&\langle\Delta^{2m+1}u, \Delta F\rangle.
\end{align*}
For the first term of $N_2$, namely
\begin{align*}
\begin{aligned}
N_{21}
=&-\langle\nabla^{2m+1}u, \nabla^{2m+1}(b\cdot\nabla b)_t\rangle
+\langle\nabla^{2m+1}u,\nabla^{2m+1}(u\cdot\nabla u)_t\rangle\\
&+\langle\nabla^{2m+1}u, \nabla^{2m+1}\partial_\theta(b\cdot\nabla u)\rangle
-\langle\nabla^{2m+1}u,\nabla^{2m+1}\partial_\theta(u\cdot\nabla b)\rangle\\
&-\langle\nabla^{2m+1}b, \nabla^{2m+1}(b\cdot\nabla u)_t\rangle
+\langle\nabla^{2m+1}b,\nabla^{2m+1}(u\cdot\nabla b)_t\rangle\\
&+\langle\nabla^{2m+1}b, \nabla^{2m+1}\partial_\theta(b\cdot\nabla b)\rangle
-\langle\nabla^{2m+1}b,\nabla^{2m+1}\partial_\theta(u\cdot\nabla u)\rangle.
\end{aligned}
\end{align*}
Similar to $N_{11}$, we only need to consider the first term of $N_{21}$.
Using the divergence-free condition $\nabla\cdot b=0$, integration by parts, Lemma \ref{lem2.1}, interpolation inequalities and Lemma \ref{lem2.2}, we have
\begin{align*}
&-\langle\nabla^{2m+1}u, \nabla^{2m+1}(b\cdot\nabla b)_t\rangle\\
=&\langle\nabla^{2m+2}u, \nabla^{2m+1}(b\otimes b)_t\rangle\\
\lesssim&\|u\|_{\dot{H}^{2m+2}}\big(\|b\|_{\dot{H}^{2m+1}}\|b_t\|_{L^\infty}
+\|b\|_{L^\infty}\|b_t\|_{\dot{H}^{2m+1}}\big)\\
\lesssim&\|\partial_\theta u\|_{\dot{H}^{2m+1}}^{\frac{2}{3}}
\|u\|_{\dot{H}^{2m+4}}^{\frac{1}{3}}
\big(\|\partial_\theta b\|_{\dot{H}^{2m+1}}\|b_t\|_{L^2}^{\frac{2}{3}}
\|b_t\|_{\dot{H}^3}^{\frac{1}{3}}
+\|\partial_\theta b\|_{L^2}^{\frac{2}{3}}\|\partial_\theta b\|_{\dot{H}^3}^{\frac{1}{3}}
\|b_t\|_{\dot{H}^{2m+1}}\big)\\
=&C(1+t)^{-\frac{1}{3}+\frac{\sigma}{6}}
\|\partial_\theta u\|_{\dot{H}^{2m+1}}^{\frac{2}{3}}
(1+t)^{-\frac{\sigma}{6}}\|u\|_{\dot{H}^{2m+4}}^{\frac{1}{3}}\\
&\times
\big(\|\partial_\theta b\|_{\dot{H}^{2m+1}}
\|b_t\|_{L^2}^{\frac{2}{3}}
(1+t)^{\frac{1}{3}}\|b_t\|_{\dot{H}^3}^{\frac{1}{3}}
+\|\partial_\theta b\|_{L^2}^{\frac{2}{3}}
(1+t)^{\frac{1}{3}}\|\partial_\theta b\|_{\dot{H}^3}^{\frac{1}{3}}
\|b_t\|_{\dot{H}^{2m+1}}\big).
\end{align*}
Hence, the first term of $N_{21}$ can be bounded by
\begin{align}\label{N21}
\begin{aligned}
&\int_0^t-\langle\nabla^{2m+1}u, \nabla^{2m+1}(b\cdot\nabla b)_\tau\rangle\md\tau\\
\lesssim&\sup_{0\leq\tau\leq t}
(1+\tau)^{-\frac{\sigma}{6}}\|u\|_{H^{2s+4}}^{\frac{1}{3}}
\|b_\tau\|_{L^2}^{\frac{2}{3}}
\big(\int_0^t\|\partial_\theta u\|_{\dot{H}^{2m+1}}^2\md\tau\big)^{\frac{1}{3}}
\big(\int_0^t\|\partial_\theta b\|_{\dot{H}^{2m+1}}^2\md\tau\big)^{\frac{1}{2}}\\
&\times\big(\int_0^t(1+\tau)^2\|b_\tau\|_{\dot{H}^3}^2\md\tau\big)^{\frac{1}{6}}\\
&+\sup_{0\leq\tau\leq t}
(1+\tau)^{-\frac{\sigma}{6}}\|u\|_{H^{2s+4}}^{\frac{1}{3}}
\|\partial_\theta b\|_{L^2}^{\frac{2}{3}}
\big(\int_0^t\|\partial_\theta u\|_{\dot{H}^{2m+1}}^2\md\tau\big)^{\frac{1}{3}}
\big(\int_0^t(1+\tau)^2\|\partial_\theta b\|_{\dot{H}^3}^2\md\tau\big)^{\frac{1}{6}}\\
&\times\big(\int_0^t\|b_\tau\|_{\dot{H}^{2m+1}}^2\md\tau\big)^{\frac{1}{2}}\\
\lesssim&E_0(t)^{\frac{1}{6}}e_0(t)^{\frac{7}{6}}e_1(t)^{\frac{1}{6}}.
\end{aligned}
\end{align}
The remaining terms of $N_{21}$ can be estimated in a similar way and we will omit the details.

For the second term of $N_2$, namely
\begin{align*}
N_{22}=\langle\Delta^{m+1}u, \Delta^{m+1}(b\cdot\nabla b-u\cdot\nabla u)\rangle
:=N_{221}+N_{222}.
\end{align*}
Using the divergence-free condition $\nabla\cdot b=0$, Lemma \ref{lem2.1}, interpolation inequalities and Lemma \ref{lem2.2}, one gets
\begin{align*}
N_{221}
=&\langle\nabla^{2m+2}u, \nabla^{2m+3}(b\otimes b)\rangle\\
\lesssim&\|u\|_{\dot{H}^{2m+2}}\|b\|_{L^\infty}\|b\|_{\dot{H}^{2m+3}}\\
\lesssim&\|\partial_\theta u\|_{\dot{H}^{2m+1}}^{\frac{2}{3}}\|u\|_{\dot{H}^{2m+4}}^{\frac{1}{3}}
\|\partial_\theta b\|_{L^2}^{\frac{2}{3}}\|\partial_\theta b\|_{\dot{H}^3}^{\frac{1}{3}}
\|b\|_{\dot{H}^{2m+3}}\\
=&C(1+t)^{-\frac{1}{3}+\frac{\sigma}{2}+\frac{\sigma}{6}}\|\partial_\theta u\|_{\dot{H}^{2m+1}}^{\frac{2}{3}}
(1+t)^{-\frac{\sigma}{6}}\|u\|_{\dot{H}^{2m+4}}^{\frac{1}{3}}
\|\partial_\theta b\|_{L^2}^{\frac{2}{3}}
(1+t)^{\frac{1}{3}}\|\partial_\theta b\|_{\dot{H}^3}^{\frac{1}{3}}\\
&\times(1+t)^{-\frac{\sigma}{2}}\|b\|_{\dot{H}^{2m+3}}.
\end{align*}
Similarly, $N_{222}$ can be bounded as follows:
\begin{align*}
N_{222}
=&\langle\nabla^{2m+2}u, -[\nabla^{2m+2},u\cdot\nabla]u\rangle\\
\lesssim&\|u\|_{\dot{H}^{2m+2}}^2\|\nabla u\|_{L^\infty}\\
\lesssim&\|\partial_\theta u\|_{\dot{H}^{2m+1}}\|u\|_{\dot{H}^{2m+3}}
\|\partial_\theta u\|_{\dot{H}^1}^{\frac{1}{2}}\|\partial_\theta u\|_{\dot{H}^3}^{\frac{1}{2}}\\
=&C(1+t)^{-\frac{1}{2}+\frac{\sigma}{2}}\|\partial_\theta u\|_{\dot{H}^{2m+1}}
(1+t)^{-\frac{\sigma}{2}}\|u\|_{\dot{H}^{2m+3}}
\|\partial_\theta u\|_{\dot{H}^1}^{\frac{1}{2}}
(1+t)^{\frac{1}{2}}\|\partial_\theta u\|_{\dot{H}^3}^{\frac{1}{2}}.
\end{align*}
We infer from the combined estimates of $N_{221}$ and $N_{222}$ that
\begin{align}\label{N22}
\begin{aligned}
&\int_0^tN_{22}\md\tau\\
\lesssim&\sup_{0\leq\tau\leq t}(1+\tau)^{-\frac{\sigma}{6}}\|u\|_{H^{2s+4}}^{\frac{1}{3}}
\|\partial_\theta b\|_{L^2}^{\frac{2}{3}}
\big(\int_0^t\|\partial_\theta u\|_{\dot{H}^{2m+1}}^2\md\tau\big)^{\frac{1}{3}}
\big(\int_0^t(1+\tau)^2\|\partial_\theta b\|_{\dot{H}^3}^2\md\tau\big)^{\frac{1}{6}}\\
&\times\big(\int_0^t(1+\tau)^{-\sigma}\|\nabla b\|_{H^{2s+2}}^2\md\tau\big)^{\frac{1}{2}}\\
&+\sup_{0\leq\tau\leq t}
(1+\tau)^{-\frac{\sigma}{2}}\|u\|_{H^{2s+3}}
\big(\int_0^t\|\partial_\theta u\|_{\dot{H}^{2m+1}}^2\md\tau\big)^{\frac{1}{2}}
\big(\int_0^t\|\partial_\theta u\|_{\dot{H}^1}^2\md\tau\big)^{\frac{1}{4}}\\
&\times\big(\int_0^t(1+\tau)^2\|\partial_\theta u\|_{\dot{H}^3}^2\md\tau\big)^{\frac{1}{4}}\\
\lesssim&E_0(t)^{\frac{2}{3}}e_0(t)^{\frac{2}{3}}e_1(t)^{\frac{1}{6}}
+E_0(t)^{\frac{1}{2}}e_0(t)^{\frac{3}{4}}e_1(t)^{\frac{1}{4}}.
\end{aligned}
\end{align}
\textbf{Step 5}\quad

Integrating (\ref{3.5}) in time, and summing up the above estimates, that is, (\ref{N11}), (\ref{N12}), (\ref{N21}) and (\ref{N22}), we obtain
\begin{align*}
&\sup_{0\leq\tau\leq t}
\big(\|u_\tau,b_\tau\|_{\dot{H}^{2m}}^2
+\|\partial_\theta u,\partial_\theta b\|_{\dot{H}^{2m}}^2+\|u,b\|_{\dot{H}^{2m+2}}^2\big)\\
&+\int_0^t\big(\|u_\tau,b_\tau\|_{\dot{H}^{2m+1}}^2
+\|\partial_\theta u,\partial_\theta b\|_{\dot{H}^{2m+1}}^2\big)\md\tau\\
\lesssim&\varepsilon^2+e_0(t)^{\frac{3}{2}}
+E_0(t)^{\frac{1}{6}}e_0(t)^{\frac{7}{6}}e_1(t)^{\frac{1}{6}}
+E_0(t)^{\frac{1}{3}}e_0(t)e_1(t)^{\frac{1}{6}}
+E_0(t)^{\frac{2}{3}}e_0(t)^{\frac{2}{3}}e_1(t)^{\frac{1}{6}}\\
&+E_0(t)^{\frac{1}{2}}e_0(t)^{\frac{3}{4}}e_1(t)^{\frac{1}{4}}.
\end{align*}
We complete the proof of this lemma.
\end{proof}

Now, we establish the decay estimate $e_1(t)$.
\begin{lem}\label{lem3.4}
From the definition in (\ref{2.3}), for $1\leq m\leq s-1$, $s\geq 2$, we have
\begin{align}\label{3}
e_1(t)\lesssim\varepsilon^2+e_0(t)+e_0(t)^{\frac{1}{2}}e_1(t)
+E_0(t)^{\frac{17}{120}}e_0(t)^{\frac{1}{3}}e_1(t)^{\frac{41}{40}}.
\end{align}
\end{lem}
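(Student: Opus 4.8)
The target is the decay estimate for $e_1(t)$, which tracks the time-weighted $(1+t)^2$ norms for $m$ in the range $1\le m\le s-1$. The natural strategy mirrors the proof of Lemma~\ref{lem3.3}: I would run the same four-step scheme on the damped wave-type system $(\ref{3.1})$, but now multiply the resulting differential identity by the weight $(1+t)^2$ before integrating in time. Concretely, take the $L^2$ inner product of $(\ref{3.1})$ with $\Delta^{2m}u_t,\ \Delta^{2m}b_t$ to get the analogue of $(\ref{3.2})$, then with $-\Delta^{2m+1}u,\ -\Delta^{2m+1}b$ to get the analogue of $(\ref{3.3})$, combine them with a suitably large multiplier and Young's inequality to reach the analogue of $(\ref{3.5})$, and finally multiply by $(1+t)^2$. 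The extra term produced by the weight is $2(1+t)\big(\|u_t,b_t\|_{\dot H^{2m}}^2+\|\partial_\theta u,\partial_\theta b\|_{\dot H^{2m}}^2+\|u_t,b_t\|_{\dot H^{2m+1}}^2\big)$; since $1\le m\le s-1$, all of these are controlled by $e_0(t)$ after an interpolation (e.g.\ $\|u_t\|_{\dot H^{2m}}^2\lesssim\|u_t\|_{\dot H^{2m-1}}\|u_t\|_{\dot H^{2m+1}}$, whose time integral against $(1+t)$ is handled by splitting the weight and using Cauchy--Schwarz between an $e_0$-factor and an $e_1$-factor, or simply by $e_0(t)$ when one factor lies strictly inside the $e_0$ range). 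This accounts for the $e_0(t)$ and $e_0(t)^{1/2}e_1(t)$ terms on the right of $(\ref{3})$.

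**Handling the nonlinear terms.** The nonlinear right-hand side splits exactly as in Lemma~\ref{lem3.3} into $N_{11}, N_{12}, N_{21}, N_{22}$, and each is treated by the same commutator/product estimates (Lemma~\ref{lem2.1}), Sobolev embedding, the Poincaré-type inequality (Lemma~\ref{lem2.2}), and interpolation — the only change being the bookkeeping of the time weight $(1+t)^2$. For the representative first term of $N_{11}$, namely $\int_0^t(1+\tau)^2\langle\Delta^m u_\tau,\Delta^m(b\cdot\nabla b)_\tau\rangle\,\md\tau$, the derivative count $2m+1\le 2s-1$ stays comfortably inside the $H^{2s+8}$ budget of $E_0$ and the $\dot H^{2m+1}$ slots of $e_1$, so a careful distribution of the weight together with the interpolation $\|u_\tau\|_{\dot H^{2m}}^2\lesssim\|u_\tau\|_{\dot H^{2m-1}}\|u_\tau\|_{\dot H^{2m+1}}$ closes it into a product of $e_0$ and $e_1$ factors, with at most one low-order factor borrowed from $E_0^{\,\text{(small power)}}$. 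The parts $N_{12}, N_{21}, N_{22}$ are where the highest derivatives on $b$ appear — in $N_{22}$ the term $\int_0^t(1+\tau)^2\langle\Delta^{m+1}u,\Delta^{m+1}(b\cdot\nabla b)\rangle\,\md\tau$ carries a $(2m+3)$-order derivative on $b$, which is precisely the "new difficult term'' flagged in the introduction. This forces $b$ to be measured in $\dot H^{2m+3}$, which for $m=s-1$ means $\dot H^{2s+1}\subset H^{2s+8}$, i.e.\ it must be absorbed into $E_0(t)$ (with the weight $(1+t)^{-\sigma}$), and the remaining factors into $e_0$ and $e_1$. Balancing the exponents — the time-weight powers against the interpolation indices — is what produces the somewhat awkward exponents $E_0(t)^{17/120}e_0(t)^{1/3}e_1(t)^{41/40}$ in $(\ref{3})$; one writes each factor as $(1+\tau)^{\alpha_i}\|\cdot\|$ with $\sum\alpha_i=0$ after using Hölder in $\tau$, solves the resulting linear system for the Hölder exponents, and reads off the homogeneity.

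**Main obstacle.** The crux is the term in $N_{22}$ with $(2m+3)$ derivatives on $b$ at the top level $m=s-1$: the lower-order energies $e_0(t)$ and $e_1(t)$ alone cannot supply that many derivatives on $b$, so one is forced to spend a (fractional) power of the highest-order energy $E_0(t)$, and the accompanying weight on $E_0$ is $(1+\tau)^{-\sigma}$ rather than a positive power of $(1+\tau)$. The delicate point is then to verify that after pulling out $E_0^{17/120}$ the leftover time-weight is still integrable once combined with the $e_1$-factor (which supplies $(1+\tau)^2$ in its integrand) — i.e.\ that $\frac{41}{40}>1$ is exactly the margin needed so that the small loss $(1+\tau)^{-\sigma/6}$-type factors do not destroy convergence, while $\sigma=\tfrac1{20}$ is small enough that the positive weight dominates. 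I would lay out the exponent arithmetic for this one term in full and remark that the other terms of $N_{11}, N_{12}, N_{21}$ are strictly easier (fewer derivatives on $b$), hence handled "in the same way with minor modifications,'' just as in the proof of Lemma~\ref{lem3.3}. Integrating $(\ref{3.5})$ weighted by $(1+t)^2$ in time, summing over $1\le m\le s-1$, and collecting all contributions yields $(\ref{3})$.
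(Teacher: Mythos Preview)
Your plan is correct and matches the paper's proof: weight the combined differential identity by $(1+t)^2$, control the resulting linear commutator terms $M_1$--$M_3$ by interpolation plus Young's inequality (the paper absorbs the $(1+\tau)^2$-piece back into the left-hand side, leaving only $e_0(t)$, rather than your Cauchy--Schwarz split), and bound the weighted nonlinear terms $M_4,M_5$ by the same product/commutator estimates as in Lemma~\ref{lem3.3}, with the critical piece $M_{521}=(1+t)^2\langle\nabla^{2m+2}u,\nabla^{2m+3}(b\otimes b)\rangle$ handled by interpolating $\|u\|_{\dot H^{2m+2}}$ and $\|b\|_{\dot H^{2m+3}}$ all the way up to $\|u\|_{\dot H^{2m+10}}^{1/12}$ and $\|b\|_{\dot H^{2m+11}}^{1/5}$ respectively --- this specific split is what produces $E_0^{1/24+1/10}=E_0^{17/120}$. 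One small correction to your bookkeeping: the $e_0(t)^{1/2}e_1(t)$ contribution in (\ref{3}) comes entirely from the nonlinear pieces $M_{41},M_{42},M_{51},M_{522}$, not from the linear weight-commutator terms, and the third quantity under the time derivative is $\|u,b\|_{\dot H^{2m+2}}^2$ (cf.\ (\ref{3.6})), whose treatment as $M_3$ is precisely where the upper restriction $m\le s-1$ is used.
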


\begin{proof}
This lemma can be proved in a similar fashion as the proof of Lemma \ref{lem3.3}.
The difference is that after multiplying by the time-weight $(1+t)^2$, compared to (\ref{3.5}), there will be some linear terms on the right-hand side of (\ref{3.6}).
By utilizing Gagliardo-Nirenberg's inequality and Young's inequality, we can effectively control these terms.

Following the same steps as deriving the equation (\ref{3.5}), we obtain
\begin{align}\label{3.6}
\begin{aligned}
&\frac{\md}{\md t}(1+t)^2
\big(\|u_t,b_t\|_{\dot{H}^{2m}}^2+\|\partial_\theta u,\partial_\theta b\|_{\dot{H}^{2m}}^2
+\|u,b\|_{\dot{H}^{2m+2}}^2\big)\\
&+(1+t)^2\big(\|u_t, b_t\|_{\dot{H}^{2m+1}}^2
+\|\partial_\theta u,\partial_\theta b\|_{\dot{H}^{2m+1}}^2\big)
=\sum_{i=1}^{i=5}M_i,
\end{aligned}
\end{align}
where,
\begin{align*}
M_1=&(1+t)\|u_t,b_t\|_{\dot{H}^{2m}}^2,\\
M_2=&(1+t)\|\partial_\theta u,\partial_\theta b\|_{\dot{H}^{2m}}^2,\\
M_3=&(1+t)\|u,b\|_{\dot{H}^{2m+2}}^2,\\
M_4=&(1+t)^2\langle\Delta^{2m}u_t, F_t-\Delta F-\partial_\theta G\rangle
+(1+t)^2\langle\Delta^{2m}b_t, G_t-\partial_\theta F\rangle,\\
M_5=&-(1+t)^2\langle\Delta^{2m+1}u, F_t-\Delta F-\partial_\theta G\rangle
-(1+t)^2\langle\Delta^{2m+1}b, G_t-\partial_\theta F\rangle.
\end{align*}
We begin with the extra linear terms $M_1-M_3$ caused by the time-weight.
By Gagliardo-Nirenberg's interpolation and Young's inequality, we have
\begin{align}\label{3.9}
\begin{aligned}
\int_0^tM_1\md\tau
\leq &C\int_0^t(1+\tau)\|u_\tau,b_\tau\|_{\dot{H}^{2m-1}}\|u_\tau,b_\tau\|_{\dot{H}^{2m+1}}\md\tau\\
\leq&\frac{1}{8}\int_0^t(1+\tau)^2\|u_\tau,b_\tau\|_{\dot{H}^{2m+1}}^2\md\tau
+C\int_0^t\|u_\tau,b_\tau\|_{\dot{H}^{2m-1}}^2\md\tau\\
\leq&\frac{1}{8}\int_0^t(1+\tau)^2\|u_\tau,b_\tau\|_{\dot{H}^{2m+1}}^2\md\tau+Ce_0(t),
\end{aligned}
\end{align}
provided $m\geq1$.
Similarly, one has
\begin{align}\label{3.10}
\begin{aligned}
\int_0^tM_2\md\tau
\leq\frac{1}{8}\int_0^t(1+\tau)^2\|\partial_\theta u,\partial_\theta b\|_{\dot{H}^{2m+1}}^2\md\tau+Ce_0(t).
\end{aligned}
\end{align}
From the interpolation inequality, Young's inequality, and Lemma \ref{lem2.2}, we get
\begin{align}\label{3.11}
\begin{aligned}
\int_0^tM_3\md\tau
\leq&C\int_0^t(1+\tau)\|u,b\|_{\dot{H}^{2m+1}}\|u,b\|_{\dot{H}^{2m+3}}\md\tau\\
\leq&\frac{1}{8}\int_0^t(1+\tau)^2\|u,b\|_{\dot{H}^{2m+1}}^2\md\tau
+C\int_0^t\|u,b\|_{\dot{H}^{2m+3}}^2\md\tau\\
\leq&\frac{1}{8}\int_0^t(1+\tau)^2\|\partial_\theta u,\partial_\theta b\|_{\dot{H}^{2m+1}}^2\md\tau+Ce_0(t),
\end{aligned}
\end{align}
provided $m\leq s-1$.

The estimation of nonlinear terms is similar to $N_1$ and $N_2$ in Lemma \ref{lem3.3}. 
In addition, the key step here is to allocate time weights reasonably.
First of all, we divide $M_4$ into two parts:
\begin{align*}
M_{41}=&(1+t)^2\langle\Delta^{2m}u_t, F_t-\partial_\theta G\rangle
+(1+t)^2\langle\Delta^{2m}b_t, G_t-\partial_\theta F\rangle,\\
M_{42}=&(1+t)^2\langle\Delta^{2m}u_t, -\Delta F\rangle.
\end{align*}
For the first part of $M_4$, namely
\begin{align*}
M_{41}
=&(1+t)^2\langle\Delta^mu_t, \Delta^m(b\cdot\nabla b)_t\rangle
-(1+t)^2\langle\Delta^mu_t,\Delta^m(u\cdot\nabla u)_t\rangle\\
&-(1+t)^2\langle\Delta^mu_t, \Delta^m\partial_\theta(b\cdot\nabla u)\rangle
+(1+t)^2\langle\Delta^mu_t,\Delta^m\partial_\theta(u\cdot\nabla b)\rangle\\
&+(1+t)^2\langle\Delta^mb_t, \Delta^m(b\cdot\nabla u)_t\rangle
-(1+t)^2\langle\Delta^mb_t,\Delta^m(u\cdot\nabla b)_t\rangle\\
&-(1+t)^2\langle\Delta^mb_t, \Delta^m\partial_\theta(b\cdot\nabla b)\rangle
+(1+t)^2\langle\Delta^mb_t,\Delta^m\partial_\theta(u\cdot\nabla u)\rangle,
\end{align*}
we also only estimate the first term of $M_{41}$.
Using integration by parts, Lemma \ref{lem2.1}, Sobolev imbedding theorem and Lemma \ref{lem2.2}, for $m\geq 1$, one has
\begin{align*}
\begin{aligned}
&\langle\Delta^mu_t, \Delta^m(b\cdot\nabla b)_t\rangle\\
=&-\langle\nabla^{2m+1}u_t, \nabla^{2m}(b\otimes b)_t\rangle\\
\lesssim&\|u_t\|_{\dot{H}^{2m+1}}\big(\|b\|_{\dot{H}^{2m}}\|b_t\|_{L^\infty}
+\|b\|_{L^\infty}\|b_t\|_{\dot{H}^{2m}}\big)\\
\lesssim&\|u_t\|_{\dot{H}^{2m+1}}\big(\|\partial_\theta b\|_{\dot{H}^{2m-2}}^{\frac{1}{3}}
\|\partial_\theta b\|_{\dot{H}^{2m+1}}^{\frac{2}{3}}
\|b_t\|_{L^2}^{\frac{2}{3}}\|b_t\|_{\dot{H}^3}^{\frac{1}{3}}
+\|\partial_\theta b\|_{L^2}^{\frac{2}{3}}\|\partial_\theta b\|_{\dot{H}^3}^{\frac{1}{3}}
\|b_t\|_{\dot{H}^{2m-2}}^{\frac{1}{3}}
\|b_t\|_{\dot{H}^{2m+1}}^{\frac{2}{3}}\big).
\end{aligned}
\end{align*}
Hence, the first term of $M_{41}$ can be bounded by
\begin{align}\label{M41}
\begin{aligned}
&\int_0^t(1+\tau)^2\langle\Delta^mu_\tau, \Delta^m(b\cdot\nabla b)_\tau\rangle\md\tau\\
\lesssim&\sup_{0\leq\tau\leq t}\|\partial_\theta b\|_{\dot{H}^{2m-2}}^{\frac{1}{3}}
\|b_\tau\|_{L^2}^{\frac{2}{3}}
\big(\int_0^t(1+\tau)^2\|u_\tau\|_{\dot{H}^{2m+1}}^2\md\tau\big)^{\frac{1}{2}}\\
&\times\big(\int_0^t(1+\tau)^2\|\partial_\theta b\|_{\dot{H}^{2m+1}}^2\md\tau\big)^{\frac{1}{3}}
\big(\int_0^t(1+\tau)^2\|b_\tau\|_{\dot{H}^3}^2\md\tau\big)^{\frac{1}{6}}\\
&+\sup_{0\leq\tau\leq t}\|b_\tau\|_{\dot{H}^{2m-2}}^{\frac{1}{3}}
\|\partial_\theta b\|_{L^2}^{\frac{2}{3}}
\big(\int_0^t(1+\tau)^2\|u_\tau\|_{\dot{H}^{2m+1}}^2\md\tau\big)^{\frac{1}{2}}\\
&\times\big(\int_0^t(1+\tau)^2\|\partial_\theta b\|_{\dot{H}^3}^2\md\tau\big)^{\frac{1}{6}}
\big(\int_0^t(1+\tau)^2\|b_\tau\|_{\dot{H}^{2m+1}}^2\md\tau\big)^{\frac{1}{3}}\\
\lesssim& e_0(t)^{\frac{1}{2}}e_1(t).
\end{aligned}
\end{align}
The remaining terms of $M_{41}$ can be estimated in a similar manner with appropriate adjustments. Further details will be omitted.

For the second term of $M_4$, namely
\begin{align*}
M_{42}=(1+t)^2\langle\nabla^{2m+1}u_t, \nabla^{2m+1}(u\cdot\nabla u-b\cdot\nabla b)\rangle
:=M_{421}+M_{422}.
\end{align*}
Using the divergence-free condition $\nabla\cdot u=0$, Lemma \ref{lem2.1} and Lemma \ref{lem2.2}, one has
\begin{align*}
M_{421}=
&(1+t)^2\langle\nabla^{2m+1}u_t, \nabla^{2m+2}(u\otimes u)\rangle\\
\lesssim&(1+t)^2\|u_t\|_{\dot{H}^{2m+1}}\|u\|_{L^\infty}\|u\|_{\dot{H}^{2m+2}}\\
\lesssim&(1+t)^2\|u_t\|_{\dot{H}^{2m+1}}\|\partial_\theta u\|_{L^2}^{\frac{2}{3}}
\|\partial_\theta u\|_{\dot{H}^3}^{\frac{1}{3}}
\|\partial_\theta u\|_{\dot{H}^{2m+1}}^{\frac{2}{3}}\|u\|_{\dot{H}^{2m+4}}^{\frac{1}{3}}\\
=&C(1+t)\|u_t\|_{\dot{H}^{2m+1}}
\|\partial_\theta u\|_{L^2}^{\frac{2}{3}}
(1+t)^{\frac{1}{3}}\|\partial_\theta u\|_{\dot{H}^3}^{\frac{1}{3}}
(1+t)^{\frac{2}{3}}\|\partial_\theta u\|_{\dot{H}^{2m+1}}^{\frac{2}{3}}
\|u\|_{\dot{H}^{2m+4}}^{\frac{1}{3}}.
\end{align*}
Similarly, $M_{422}$ can be bounded as follows:
\begin{align*}
M_{422}=&(1+t)^2\langle\nabla^{2m+1}u_t, \nabla^{2m+2}(-b\otimes b)\rangle\\
\lesssim&(1+t)^2\|u_t\|_{\dot{H}^{2m+1}}\|b\|_{L^\infty}\|b\|_{\dot{H}^{2m+2}}\\
\lesssim&(1+t)^2\|u_t\|_{\dot{H}^{2m+1}}\|\partial_\theta b\|_{L^2}^{\frac{2}{3}}
\|\partial_\theta b\|_{\dot{H}^3}^{\frac{1}{3}}
\|\partial_\theta b\|_{\dot{H}^{2m+1}}^{\frac{2}{3}}\|b\|_{\dot{H}^{2m+4}}^{\frac{1}{3}}\\
=&C(1+t)\|u_t\|_{\dot{H}^{2m+1}}
\|\partial_\theta b\|_{L^2}^{\frac{2}{3}}
(1+t)^{\frac{1}{3}}\|\partial_\theta b\|_{\dot{H}^3}^{\frac{1}{3}}
(1+t)^{\frac{2}{3}}\|\partial_\theta b\|_{\dot{H}^{2m+1}}^{\frac{2}{3}}
\|b\|_{\dot{H}^{2m+4}}^{\frac{1}{3}}.
\end{align*}
Therefore, combining the estimates of $M_{421}$ and $M_{422}$ gives that
\begin{align}\label{M42}
\begin{aligned}
\int_0^tM_{42}\md\tau
\lesssim&\sup_{0\leq\tau\leq t}
\|\partial_\theta u\|_{L^2}^{\frac{2}{3}}
\|u\|_{\dot{H}^{2m+4}}^{\frac{1}{3}}
\big(\int_0^t(1+\tau)^2\|u_\tau\|_{\dot{H}^{2m+1}}^2\md\tau\big)^{\frac{1}{2}}\\
&\times\big(\int_0^t(1+\tau)^2\|\partial_\theta u\|_{\dot{H}^3}^2\md\tau\big)^{\frac{1}{6}}
\big(\int_0^t(1+\tau)^2\|\partial_\theta u\|_{\dot{H}^{2m+1}}^2\md\tau\big)^{\frac{1}{3}}\\
&+\sup_{0\leq\tau\leq t}
\|\partial_\theta b\|_{L^2}^{\frac{2}{3}}
\|b\|_{\dot{H}^{2m+4}}^{\frac{1}{3}}
\big(\int_0^t(1+\tau)^2\|u_\tau\|_{\dot{H}^{2m+1}}^2\md\tau\big)^{\frac{1}{2}}\\
&\times\big(\int_0^t(1+\tau)^2\|\partial_\theta b\|_{\dot{H}^3}^2\md\tau\big)^{\frac{1}{6}}
\big(\int_0^t(1+\tau)^2\|\partial_\theta b\|_{\dot{H}^{2m+1}}^2\md\tau\big)^{\frac{1}{3}}\\
\lesssim& e_0(t)^{\frac{1}{2}}e_1(t).
\end{aligned}
\end{align}

Next, for the term $M_5$, we also divide it into two parts:
\begin{align*}
M_{51}=&-(1+t)^2\langle\Delta^{2m+1}u, F_t-\partial_\theta G\rangle
-(1+t)^2\langle\Delta^{2m+1}b, G_t-\partial_\theta F\rangle,\\
M_{52}=&-(1+t)^2\langle\Delta^{2m+1}u, -\Delta F\rangle.
\end{align*}
For the first part of $M_5$, namely
\begin{align*}
M_{51}
=&-(1+t)^2\langle\nabla^{2m+1}u, \nabla^{2m+1}(b\cdot\nabla b)_t\rangle
+(1+t)^2\langle\nabla^{2m+1}u,\nabla^{2m+1}(u\cdot\nabla u)_t\rangle\\
&+(1+t)^2\langle\nabla^{2m+1}u, \nabla^{2m+1}\partial_\theta(b\cdot\nabla u)\rangle
-(1+t)^2\langle\nabla^{2m+1}u,\nabla^{2m+1}\partial_\theta(u\cdot\nabla b)\rangle\\
&-(1+t)^2\langle\nabla^{2m+1}b, \nabla^{2m+1}(b\cdot\nabla u)_t\rangle
+(1+t)^2\langle\nabla^{2m+1}b,\nabla^{2m+1}(u\cdot\nabla b)_t\rangle\\
&+(1+t)^2\langle\nabla^{2m+1}b, \nabla^{2m+1}\partial_\theta(b\cdot\nabla b)\rangle
-(1+t)^2\langle\nabla^{2m+1}b,\nabla^{2m+1}\partial_\theta(u\cdot\nabla u)\rangle,
\end{align*}
we only consider the first term of $M_{51}$.
By utilizing the divergence-free condition $\nabla\cdot b=0$, integration by parts, Lemma \ref{lem2.1} and Lemma \ref{lem2.2}, one gets
\begin{align*}
&-\langle\nabla^{2m+1}u, \nabla^{2m+1}(b\cdot\nabla b)_t\rangle\\
=&\langle\nabla^{2m+2}u, \nabla^{2m+1}(b\otimes b)_t\rangle\\
\lesssim&\|u\|_{\dot{H}^{2m+2}}\big(\|b\|_{\dot{H}^{2m+1}}\|b_t\|_{L^\infty}
+\|b\|_{L^\infty}\|b_t\|_{\dot{H}^{2m+1}}\big)\\
\lesssim&\|\partial_\theta u\|_{\dot{H}^{2m+1}}^{\frac{2}{3}}\|u\|_{\dot{H}^{2m+4}}^{\frac{1}{3}}
\big(\|\partial_\theta b\|_{\dot{H}^{2m+1}}\|b_t\|_{L^2}^{\frac{2}{3}}\|b_t\|_{\dot{H}^3}^{\frac{1}{3}}
+\|\partial_\theta b\|_{L^2}^{\frac{2}{3}}\|\partial_\theta b\|_{\dot{H}^3}^{\frac{1}{3}}
\|b_t\|_{\dot{H}^{2m+1}}\big).
\end{align*}
Hence, the first term of $M_{51}$ can be estimated by
\begin{align}\label{M51}
\begin{aligned}
&\int_0^t-(1+\tau)^2\langle\nabla^{2m+1}u, \nabla^{2m+1}(b\cdot\nabla b)_\tau\rangle\md\tau\\
\lesssim&\sup_{0\leq\tau\leq t}\|b_\tau\|_{L^2}^{\frac{2}{3}}\|u\|_{\dot{H}^{2m+4}}^{\frac{1}{3}}
\big(\int_0^t(1+\tau)^2\|\partial_\theta u\|_{\dot{H}^{2m+1}}^2\md\tau\big)^{\frac{1}{3}}\\
&\times\big(\int_0^t(1+\tau)^2\|\partial_\theta b\|_{\dot{H}^{2m+1}}^2\md\tau\big)^{\frac{1}{2}}
\big(\int_0^t(1+\tau)^2\|b_\tau\|_{\dot{H}^3}^2\md\tau\big)^{\frac{1}{6}}\\
&+\sup_{0\leq\tau\leq t}\|\partial_\theta b\|_{L^2}^{\frac{2}{3}}\|u\|_{\dot{H}^{2m+4}}^{\frac{1}{3}}
\big(\int_0^t(1+\tau)^2\|\partial_\theta u\|_{\dot{H}^{2m+1}}^2\md\tau\big)^{\frac{1}{3}}\\
&\times\big(\int_0^t(1+\tau)^2\|\partial_\theta b\|_{\dot{H}^3}^2\md\tau\big)^{\frac{1}{6}}
\big(\int_0^t(1+\tau)^2\|b_\tau\|_{\dot{H}^{2m+1}}^2\md\tau\big)^{\frac{1}{2}}\\
\lesssim& e_0(t)^{\frac{1}{2}}e_1(t).
\end{aligned}
\end{align}
After some adjustments, the remaining terms of $M_{51}$ can be handled by similar methods. We will omit details here.

For the second term of $M_5$, namely
\begin{align*}
M_{52}=(1+t)^2\langle\nabla^{2m+2}u, \nabla^{2m+2}(b\cdot\nabla b-u\cdot\nabla u)\rangle
:=M_{521}+M_{522}.
\end{align*}
For $M_{521}$, the wildest term, we will carefully balance the time-wight $(1+t)^2$ and the index of interpolation inequalities, so that this term can be controlled by the energy framework defined in Section 2.
Using the divergence-free condition, Lemma \ref{lem2.1}, interpolation inequalities and Lemma \ref{lem2.2}, one has
\begin{align}\label{d2}
\begin{aligned}
M_{521}
=&(1+t)^2\langle\nabla^{2m+2}u, \nabla^{2m+3}(b\otimes b)\rangle\\
\lesssim&(1+t)^2\|u\|_{\dot{H}^{2m+2}}\|b\|_{L^\infty}\|b\|_{\dot{H}^{2m+3}}\\
\lesssim&(1+t)^2\|\partial_\theta u\|_{\dot{H}^{2m+2}}^{\frac{1}{4}}\|\partial_\theta u\|_{\dot{H}^{2m+1}}^{\frac{2}{3}}
\|u\|_{\dot{H}^{2m+10}}^{\frac{1}{12}}
\|\partial_\theta b\|_{L^2}^{\frac{2}{3}}\|\partial_\theta b\|_{\dot{H}^3}^{\frac{1}{3}}\|\partial_\theta b\|_{\dot{H}^{2m+1}}^{\frac{4}{5}}
\|b\|_{\dot{H}^{2m+11}}^{\frac{1}{5}}\\
=&C(1+t)^{-\frac{1}{20}+\frac{\sigma}{24}+\frac{\sigma}{10}}
(1+t)^{\frac{1}{4}}\|u\|_{\dot{H}^{2m+2}}^{\frac{1}{4}}
(1+t)^{\frac{2}{3}}\|\partial_\theta u\|_{\dot{H}^{2m+1}}^{\frac{2}{3}}
(1+t)^{-\frac{\sigma}{24}}\|u\|_{\dot{H}^{2m+10}}^{\frac{1}{12}}\\
&\times
\|\partial_\theta b\|_{L^2}^{\frac{2}{3}}
(1+t)^{\frac{1}{3}}\|\partial_\theta b\|_{\dot{H}^3}^{\frac{1}{3}}(1+t)^{\frac{4}{5}}\|\partial_\theta b\|_{\dot{H}^{2m+1}}^{\frac{4}{5}}
(1+t)^{-\frac{\sigma}{10}}\|b\|_{\dot{H}^{2m+11}}^{\frac{1}{5}}.
\end{aligned}
\end{align}
Similarly, $M_{522}$ can be bounded by
\begin{align*}
M_{522}
=&-(1+t)^2\langle\nabla^{2m+2}u, [\nabla^{2m+2},u\cdot\nabla]u\rangle\\
\lesssim&(1+t)^2\|u\|_{\dot{H}^{2m+2}}\|u\|_{\dot{H}^{2m+2}}\|\nabla u\|_{L^\infty}\\
\lesssim&(1+t)^2\|u\|_{\dot{H}^{2m+2}}^{\frac{1}{2}}\|\partial_\theta u\|_{\dot{H}^{2m+1}}
\|u\|_{\dot{H}^{2m+4}}^{\frac{1}{2}}
\|\partial_\theta u\|_{\dot{H}^1}^{\frac{1}{2}}
\|\partial_\theta u\|_{\dot{H}^3}^{\frac{1}{2}}\\
=&C
(1+t)^{\frac{1}{2}}\|u\|_{\dot{H}^{2m+2}}^{\frac{1}{2}}
(1+t)\|\partial_\theta u\|_{\dot{H}^{2m+1}}
\|u\|_{\dot{H}^{2m+4}}^{\frac{1}{2}}
\|\partial_\theta u\|_{\dot{H}^1}^{\frac{1}{2}}
(1+t)^{\frac{1}{2}}\|\partial_\theta u\|_{\dot{H}^3}^{\frac{1}{2}}.
\end{align*}

Hence we deduce, by combining $M_{521}$ and $M_{522}$, that
\begin{align}
\nonumber&\int_0^tM_{52}\md\tau\\
\nonumber\lesssim&\sup_{0\leq\tau\leq t}(1+\tau)^{\frac{1}{4}}\|u\|_{\dot{H}^{2m+2}}^{\frac{1}{4}}
(1+\tau)^{-\frac{\sigma}{24}}\|u\|_{H^{2s+8}}^{\frac{1}{12}}
\|\partial_\theta b\|_{L^2}^{\frac{2}{3}}
\big(\int_0^t(1+\tau)^2\|\partial_\theta u\|_{\dot{H}^{2m+1}}^2\md\tau\big)^{\frac{1}{3}}\\
\nonumber&\times\big(\int_0^t(1+\tau)^2\|\partial_\theta b\|_{\dot{H}^{2m+1}}^2\md\tau\big)^{\frac{2}{5}}
\big(\int_0^t(1+\tau)^2\|\partial_\theta b\|_{\dot{H}^3}^2\md\tau\big)^{\frac{1}{6}}
\big(\int_0^t(1+\tau)^{-\sigma}\|\nabla b\|_{H^{2s+8}}^2\md\tau\big)^{\frac{1}{10}}\\
\nonumber&+\sup_{0\leq\tau\leq t}
(1+\tau)^{\frac{1}{2}}\|u\|_{\dot{H}^{2m+2}}^{\frac{1}{2}}
\|u\|_{\dot{H}^{2m+4}}^{\frac{1}{2}}
\big(\int_0^t(1+\tau)^2\|\partial_\theta u\|_{\dot{H}^{2m+1}}^2\md\tau\big)^{\frac{1}{2}}
\big(\int_0^t\|\partial_\theta u\|_{\dot{H}^1}^2\md\tau\big)^{\frac{1}{4}}\\
\nonumber&\times
\big(\int_0^t(1+\tau)^2\|\partial_\theta u\|_{\dot{H}^3}^2\md\tau\big)^{\frac{1}{4}}\\
\lesssim& E_0(t)^{\frac{17}{120}}e_0(t)^{\frac{1}{3}}e_1(t)^{\frac{41}{40}}
+e_0(t)^{\frac{1}{2}}e_1(t).\label{M52}
\end{align}

Integrating (\ref{3.6}) in time and invoking the estimates for $M_1$-$M_5$, that is (\ref{3.9})-(\ref{M51}) and (\ref{M52}), we deduce that
\begin{align*}
\begin{aligned}
&\sup_{0\leq\tau\leq t}(1+\tau)^2
\big(\|u_\tau,b_\tau\|_{\dot{H}^{2m}}^2+\|\partial_\theta u,\partial_\theta b\|_{\dot{H}^{2m}}^2
+\|u,b\|_{\dot{H}^{2m+2}}^2\big)\\
&+\int_0^t(1+\tau)^2\big(\|u_\tau,b_\tau\|_{\dot{H}^{2m+1}}^2
+\|\partial_\theta u,\partial_\theta b\|_{\dot{H}^{2m+1}}^2\big)\md\tau\\
\lesssim&\varepsilon^2+e_0(t)+e_0(t)^{\frac{1}{2}}e_1(t)
+E_0(t)^{\frac{17}{120}}e_0(t)^{\frac{1}{3}}e_1(t)^{\frac{41}{40}}.
\end{aligned}
\end{align*}
We complete the proof of this lemma.
\end{proof}

\section{Proof of Theorem \ref{thm0.1}}

In this section, we utilize the bootstrapping argument to conclude the proof of Theorem \ref{thm0.1}.

\emph{Proof of Theorem \ref{thm0.1}.}
The existence of local smooth solutions follows a standard procedure (refer to the book \cite{T2001, MB2002} for more information).

Thus, our only goal is to prove
\begin{eqnarray*}
E(t)=E_0(t)+e_0(t)+e_1(t)\leq C\varepsilon^2
\end{eqnarray*}
holds for all $t>0$. Here, $E(t)$ denotes the total energy and $C$ represents a positive constant defined later.
At the outset, it is evident from the setting of initial data (\ref{**}) that $E(0)\lesssim\varepsilon^2$.
By multiplying $(\ref{1})$, $(\ref{2})$, and $(\ref{3})$ with a suitable constant respectively, and then adding them together, we obtain
\begin{align}\label{4}
E(t)\leq C_0\varepsilon^2+C_0E(t)^{\frac{3}{2}}.
\end{align}

Next, by setting $\varepsilon:=\frac{1}{4C_0^{3/2}}$ and making the ansatz that
\begin{align*}
E(t)\leq \frac{1}{4C_0^2},
\end{align*}
we conclude, from equation (\ref{4}), that
\begin{align*}
E(t)\leq2C_0\varepsilon^2:=\frac{1}{8C_0^2},
\end{align*}
for some positive constant $C_0$.

Consequently, the bootstrapping argument demonstrates that
\begin{align*}
E(t)\leq C\varepsilon^2,\quad\forall t\in[0,\infty),
\end{align*}
where $C=2C_0$.

The proof of Theorem \ref{thm0.1} is now complete.

\vspace{.2in}

\textbf{Acknowledgements}
The author sincerely thank Prof. Yi zhou and Prof. Yi zhu for helpful discussions.

\end{document}